\newtheorem{theorem}{Theorem}[section]
\newtheorem{proposition}[theorem]{Proposition}
\newtheorem{lemma}[theorem]{Lemma}
\newtheorem{corollary}[theorem]{Corollary}
\newtheorem{claim}[theorem]{Claim}
\theoremstyle{definition}
\newtheorem{definition}[theorem]{Definition}
\newtheorem{example}[theorem]{Example}
\renewcommand{\t}{\mathfrak{t}}
\newcommand{\calc}{\mathcal{C}}
\newcommand{\Z}{\mathbb{Z}}
\newcommand{\Q}{\mathbb{Q}}
\newcommand{\Spinc}{{\rm Spin}^c}
\newcommand{\spinc}{\text{spin}^c}
\newcommand{\defeq}{\vcentcolon=}
\newcommand{\lift}{\tilde\rho}
\newcommand{\fs}{\mathfrak{s}}
\newcommand{\ft}{\mathfrak{t}}
\def\co{\colon\thinspace}
\newtheorem*{rep@theorem}{\rep@title}
\newcommand{\newreptheorem}[2]{%
\newenvironment{rep#1}[1]{%
 \def\rep@title{#2~\ref{##1}}%
 \begin{rep@theorem}}%
 {\end{rep@theorem}}}
\begin{document}
\makeatletter
\providecommand\@dotsep{5}
\makeatother
\rhead{\thepage}
\lhead{\author}
\thispagestyle{empty}

\raggedbottom
\pagenumbering{arabic}
\setcounter{section}{0}



\title{Linear independence in the rational homology cobordism group}
\author{Marco Golla}
\address{CNRS, Laboratoire de Math\'ematiques Jean Leray, Nantes, France}
\email{marco.golla@univ-nantes.fr}

\author{Kyle Larson}
\address{Alfr\'ed R\'enyi Institute of Mathematics, Budapest, Hungary}
\email{larson@renyi.mta.hu}

\begin{abstract}
We give simple homological conditions for a rational homology 3--sphere $Y$ to have infinite order in the rational homology cobordism group $\Theta^3_\Q$, and for a collection of rational homology spheres to be linearly independent.
These translate immediately to statements about knot concordance when $Y$ is the branched double cover of a knot, recovering some results of Livingston and Naik.
The statements depend only on the homology groups of the 3--manifolds, but are proven through an analysis of correction terms and their behavior under connected sums. 
\end{abstract}
\maketitle

\begin{section}{Introduction}
Given a ring $R$, the $R$--homology cobordism group $\Theta^3_R$ is defined as the set of $R$--homology $3$--spheres up to \emph{smooth} $R$--homology cobordism, with the operation of connected sum; that is, $\Theta^3_R$ the set of equivalence classes of 3--manifolds $Y$ such that $H_*(Y;R) = H_*(S^3;R)$, modulo \emph{smooth} cobordisms $W: Y\to Y'$ such that $H_*(W,Y;R) = H_*(W,Y';R) = 0$.
In this paper we will consider the two cases $R = \Z$ and $R = \Q$, namely the (integral) homology cobordism group and the rational homology cobordism group, respectively.
These groups have been extensively studied (e.g. \cite{CassonHarer, Froyshov, OSz, Lisca-sums}) and are still a remarkable source of open problems (e.g. Problem 4.5 in~\cite{Kirbyslist}); for instance, it is not yet known if there are $n$-torsion elements in $\Theta^3_\Q$ for $n\neq 2$, or if there are $\Q$--summands or quotients in $\Theta^3_\Q$.

The simplest obstruction to a rational homology 3--sphere $Y$ being trivial in the rational homology cobordism group $\Theta^3_\Q$ is the following: if $Y$ bounds a rational homology ball then the order of its first homology must be a square.
One of our goals is to provide similarly simple obstructions to a rational homology sphere having finite order in $\Theta^3_\Q$.
Note that the order of the first homology of $\#^{2m} Y$ is always a square, and so the above obstruction is of little help.
Nonetheless, there do exist obstructions depending only on the homology of $Y$.
More generally, we can give a statement about linear independence in $\Theta^3_\Q / \Theta^3_\Z$ for an infinite family $Y_1,Y_2,\dots$, which depends only on the homology of $Y_i$.
(By the quotient $\Theta^3_\Q / \Theta^3_\Z$ we really mean $\Theta^3_\Q / \psi(\Theta^3_\Z)$, where $\psi \co \Theta^3_\Z \rightarrow \Theta^3_\Q$ is the natural homomorphism: an integral homology sphere is also a rational homology sphere.)

\begin{theorem}\label{linear independence}
Let $\{Y_{i}\}_{i\ge 1}$ be a collection of $\Z/2\Z$--homology $3$--spheres for which there exists a collection $\{p_i\}_{i\ge 1}$ of pairwise distinct primes congruent to $3$ modulo $4$, and a collection $\{n_i\}_{i\ge 1}$ of odd, positive integers, such that:
\begin{itemize}
\item the $p_i$--primary part of $H_1(Y_{i})$ is $\Z/p_i^{n_i}\Z$, and
\item the $p_j$--primary part of $H_1(Y_i)$ is $0$ for each $i\neq j$.
\end{itemize}
Then $\{Y_{i}\}_{i\ge 1}$ is a linearly independent set in $\Theta^3_\Q / \Theta^3_\Z$.
In particular, $Y_i$ has infinite order in $\Theta^3_\Q / \Theta^3_\Z$ for each $i$.
\end{theorem}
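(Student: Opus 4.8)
The plan is to argue by contradiction. Suppose some finite linear combination $Y \defeq \#_{i} a_i Y_i$, with $a_i\in\Z$ (only finitely many nonzero) and at least one $a_{j_0}\neq 0$, represents a class of $\psi(\Theta^3_\Z)$; equivalently, $Y$ is smoothly rationally homology cobordant, through some cobordism $W$, to an integral homology sphere $Z$. From $W$ I would extract the usual package: first, $G\defeq\ker(H_1(Y)\to H_1(W))$ is a metabolizer for the linking form of $Y$ (using $H_1(Z)=0$, the ``half lives, half dies'' principle gives a self-annihilating subgroup with $|G|^2=|H_1(Y)|$); second, the set $\mathcal G\subseteq\spinc(Y)$ of spin$^c$ structures that extend over $W$ and restrict to the unique spin$^c$ structure on $Z$ is a coset of a metabolizer-sized subgroup of $H^2(Y;\Z)$, is invariant under conjugation, and carries a constant correction term, namely $d(Y,\fs)=d(Z)$ for all $\fs\in\mathcal G$. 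Two features are crucial: $d(Z)\in 2\Z$ since $Z$ is an integral homology sphere; and, since $H_1(Y)$ has odd order, the conjugation-invariant coset $\mathcal G$ contains the unique spin structure of $Y$.

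Next I would localize at each prime $p_j$. As the $p_i$ are pairwise distinct and each $Y_i$ has no $p_j$-torsion for $j\neq i$, the $p_j$-primary part of $H_1(Y)$ is $(\Z/p_j^{n_j}\Z)^{\oplus|a_j|}$, with linking form $|a_j|$ orthogonal copies of the $p_j$-primary part of the linking form of $Y_j$ (up to an overall sign $\mathrm{sign}(a_j)$). Since abelian groups of coprime order admit no ``mixed'' subgroups, both $G$ and the coset $\mathcal G$ split as direct products over the primes; restricting to $p_j$ yields a metabolizer for $|a_j|$ copies of this linking form together with a metabolizer-sized coset of spin$^c$ structures of $\#^{|a_j|}(\pm Y_j)$ on which its correction term is constant.

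The first constraint is purely algebraic: by the classification of nondegenerate linking forms on finite abelian $p$-groups, $|a_j|$ orthogonal copies of a linking form on $\Z/p_j^{n_j}\Z$ with $n_j$ odd admit a metabolizer only if $|a_j|$ is even, and when $|a_j|=2$ only if $-1$ is a square modulo $p_j$, i.e.\ only if $p_j\equiv 1\pmod 4$. As $p_j\equiv 3\pmod 4$, this forces $|a_j|=0$ or $|a_j|\ge 4$.

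The heart of the matter, which I expect to be the main obstacle, is to exclude $|a_j|\ge 4$ using the correction terms, which I would isolate as follows. \emph{If $p\equiv 3\pmod 4$ is prime, $n$ is odd, and $Y'$ is a $\Z/2\Z$--homology sphere whose $p$--primary first homology is $\Z/p^{n}\Z$, then for every $k\ge 1$ the correction terms of $\#^{k}Y'$ are not constant on any metabolizer coset of its spin$^c$ structures.} Applying this with $Y'=Y_{j_0}$ and $k=|a_{j_0}|$ contradicts the constancy of $d$ on $\mathcal G$, which proves the theorem (the last sentence being the case of a single nonzero coefficient). To prove the claim I would pass to the reduction of the correction terms modulo $2\Z$, which for a rational homology sphere depends only on the linking form, so that for $\#^{k}Y'$ it may be computed from the lens space model $\#^{k}L(p^{n},q)$ via the Ozsv\'ath--Szab\'o recursion; one then has to show that, compared with the value $d(Z)\in 2\Z$ forced along $\mathcal G$, these reductions cannot be constant on any metabolizer coset. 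The arithmetic input that should make this go through is precisely that $-1$ is a quadratic non-residue modulo $p$ (for $n$ odd), so that the Gauss-type sums governing the recursion do not vanish on the isotropic subspaces eligible to be metabolizers. Carrying out this last estimate carefully — in particular verifying that no metabolizer coset can avoid the ``bad'' spin$^c$ structures — is the technical core; everything preceding it is formal.
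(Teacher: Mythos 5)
Your reduction to a single prime is sound and matches the paper's: pass to a rational homology ball bounded by $mY_{j}\#N$ with $H_1(N;\Z/p_j\Z)=0$, extract a metabolizer on which $d$ is constant, note by conjugation-equivariance and the odd order of the homology that the relevant coset contains the spin structure (this is Lemma~\ref{l:spin-extension}), and restrict everything to the $p_j$-primary part. Your ``first constraint'' is also essentially the Witt-class obstruction for linking forms and is fine as far as it goes. The problem is the step you yourself flag as the heart of the matter: the claim that correction terms of $\#^{k}(\pm Y_j)$ cannot be constant on a metabolizer is asserted, not proved, and the route you propose for it cannot work. You suggest reducing $d$ modulo $2\Z$, so that it is determined by the linking form, and then arguing via the lens-space model and Gauss sums. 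But on a metabolizer the quadratic refinement $x\mapsto -\lambda(x,x)\in\Q/2\Z$ already vanishes in $\Q/\Z$, and the residual $\Z/2\Z$-valued information is exactly the Witt class of the form: for $|a_j|\equiv 2\pmod 4$ it is nontrivial (this is your $-1$ non-residue observation), but for $|a_j|\equiv 0\pmod 4$ the direct sum of $|a_j|$ copies of the linking form of $Y_j$ is Witt-trivial, so there \emph{do} exist metabolizers on which the mod-$2$ reduction of $\bar d$ is identically zero. Mod-$2$ data alone therefore sees nothing in precisely the cases your earlier constraint fails to exclude.

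What is actually needed — and what the paper does — is to exploit the full \emph{rational} vanishing of $d$ on the metabolizer, which is a much stronger condition than vanishing mod $2$. This is Proposition~\ref{metabolizers}, adapted from Livingston--Naik: put the metabolizer generators of $(\Z/p^n\Z)^{\oplus 2m}$ in echelon form, build the element $z$ whose coordinates are concentrated on the value $p^{r-1}$, map metabolizer elements into $\Q[t]/(t^{\overline q}-1)$ by counting coordinates up to the action of the cyclic group $(\Z/p^{n-r+1}\Z)^*/\{\pm1\}$ of \emph{odd} order $\overline q$, and observe that the resulting polynomial has constant coefficient dominating the sum of the others, hence cannot vanish at an odd root of unity. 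By induction this forces the single-copy function $f(h)=d(Y_j,\ft_h)-d(Y_j,\ft_0)$ to vanish \emph{rationally} on the subgroup of order $p^{(n+1)/2}$, which contradicts the mod-$2$ non-vanishing of $\rho$ there supplied by Lemma~\ref{linking} (this is where $p\equiv 3\pmod 4$ and $n$ odd enter, via the classification of linking forms on $\Z/p^n\Z$). Without this combinatorial propagation from the metabolizer of the $2m$-fold sum down to a subgroup of a single factor, your argument does not close, so the proposal has a genuine gap at its central step.
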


For example, a simple special case is when $H_1(Y_{i}) \cong \Z/p_i\Z$ for each $i$.

While the hypotheses of the theorem only depend on the homology of $Y$, the proof depends on some non-vanishing results for correction terms (in Seiberg--Witten or Heegaard Floer theory).
In fact, we show that certain correction terms are non-vanishing modulo 2, and this will explain why we get obstructions to being trivial in $\Theta^3_\Q / \Theta^3_\Z$ and not just $\Theta^3_\Q$.
From Theorem~\ref{linear independence} it follows immediately that $\Theta^3_\Q/\Theta^3_\Z$ contains subgroups isomorphic to $\Z^\infty$, a fact first shown in \cite{KL} following work in \cite{HLR}.
Another proof appears in \cite{AL} using the description of the subgroup of $\Theta^3_\Q$ generated by lens spaces from \cite{Lisca-sums}.

Instead of focusing only on the homology of a rational homology sphere $Y$, we can add the geometric condition that $Y$ is obtained by integral surgery on a knot in $S^3$. Then we get the following slightly different obstruction to being of finite order in $\Theta^3_\Q$.

\begin{theorem}\label{finite order}
Let $Y$ be a rational homology $3$--sphere and $n$ an odd integer.
If $Y$ is $n$--surgery on a knot in $S^3$ and $|n| \not\equiv 1 \pmod{8}$, then $Y$ has infinite order in $\Theta^3_\Q$.
\end{theorem}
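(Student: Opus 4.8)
The plan is to prove the contrapositive: if $Y=S^3_n(K)$ has finite order in $\Theta^3_\Q$, then a single correction term of $Y$ vanishes, and this correction term turns out to be nonzero exactly when $|n|\not\equiv1\pmod 8$. (One cannot simply invoke Theorem~\ref{linear independence} for $Y$ itself, since that would require a prime congruent to $3$ modulo $4$ dividing $n$ to an odd power, which can fail here --- for instance when $n=45$.) We may assume $n>0$: passing to the mirror of $K$ replaces $Y$ by $-Y$ and leaves $|n|$ unchanged, and $Y$ has finite order if and only if $-Y$ does. Since $n$ is odd, $H_1(Y)\cong\Z/n\Z$ has odd order, so $Y$ carries a unique spin structure $\mathfrak{s}_{\mathrm{spin}}$, which is also the unique self-conjugate $\Spinc$ structure; the invariant we track is $d(Y,\mathfrak{s}_{\mathrm{spin}})$.

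First I would record a general fact: if $Z$ is a rational homology sphere with $|H_1(Z)|$ odd and $Z$ has finite order in $\Theta^3_\Q$, then $d(Z,\mathfrak{s}_{\mathrm{spin}})=0$. Choose $m\ge 1$ with $\#^m Z$ bounding a rational homology ball $W$. The $\Spinc$ structures on $\#^m Z$ that extend over $W$ form a coset of the image of the restriction $H^2(W;\Z)\to H^2(\#^m Z;\Z)$, which is a subgroup of $H^2(\#^m Z;\Z)\cong H_1(\#^m Z;\Z)$ and hence of odd order. This coset is invariant under conjugation, so conjugation --- an involution on a set of odd size --- fixes a point of it, and as $H_1(\#^m Z)$ has no $2$-torsion that point must be the unique spin structure. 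Therefore $\mathfrak{s}_{\mathrm{spin}}$ extends over $W$, so $d(\#^m Z,\mathfrak{s}_{\mathrm{spin}})=0$, since correction terms vanish on $\Spinc$ structures extending over a rational homology ball. Because the spin structure of a connected sum is the connected sum of the spin structures, additivity of the correction terms yields $0=d(\#^m Z,\mathfrak{s}_{\mathrm{spin}})=m\,d(Z,\mathfrak{s}_{\mathrm{spin}})$, so $d(Z,\mathfrak{s}_{\mathrm{spin}})=0$. Applying this to $Z=Y$, it remains to prove $d(S^3_n(K),\mathfrak{s}_{\mathrm{spin}})\neq 0$ for $n>0$ odd with $n\not\equiv 1\pmod 8$.

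For this I would use the surgery formula for correction terms of Ozsv\'ath--Szab\'o and Ni--Wu: for $n>0$ one has $d(S^3_n(K),i)=d(L(n,1),i)-2V_{\min(i,\,n-i)}(K)$ in the standard labelling of $\Spinc$ structures by $\Z/n\Z$, where the $V_j(K)$ are non-negative integers, non-increasing in $j$. The self-conjugate structure is the class $i=0$, and there the formula gives $d(S^3_n(K),\mathfrak{s}_{\mathrm{spin}})=d(L(n,1),\mathfrak{s}_{\mathrm{spin}})-2V_0(K)$. From $d(L(n,1),i)=\tfrac{(2i-n)^2-n}{4n}$ one computes that this value is attained only at $i=0$ --- which reconfirms that $i=0$ is the self-conjugate class, as $d$ is conjugation-invariant --- and equals $\tfrac{n-1}{4}$. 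Hence $d(S^3_n(K),\mathfrak{s}_{\mathrm{spin}})=\tfrac{n-1}{4}-2V_0(K)$; if this were $0$ then $n-1=8V_0(K)\in 8\Z$, i.e.\ $n\equiv1\pmod 8$, contrary to hypothesis. Therefore $Y$ has infinite order in $\Theta^3_\Q$.

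The part that will take the most care is the $\Spinc$ bookkeeping: matching the spin structure of $S^3_n(K)$ with the $i=0$ class of the surgery formula and with the unique self-conjugate class of $L(n,1)$, and checking that at $i=0$ the surgery formula really contributes $V_0$ rather than a maximum of two distinct invariants --- it is $\max\{V_0,V_n\}=V_0$ because the $V_j$ are non-increasing in $j$. The remaining ingredients --- additivity of $d$ under connected sum, the identification of the extending $\Spinc$ structures with a coset of odd order, and the vanishing of $d$ along a rational homology ball --- are standard.
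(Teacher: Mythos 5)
Your proposal is correct, and its first half (the reduction to showing a single correction term is nonzero) is exactly the paper's: the unique spin structure on an odd-order rational homology sphere extends over any rational homology ball filling because the extending $\Spinc$ structures form a conjugation-invariant set of odd cardinality (Lemma~\ref{l:spin-extension}), and then additivity forces $m\,d(Y,\ft_0)=0$. Where you diverge is in proving $d(S^3_n(K),\ft_0)\neq 0$: you invoke the Ozsv\'ath--Szab\'o/Ni--Wu integer surgery formula to get the exact value $\tfrac{n-1}{4}-2V_0(K)$, whereas the paper never computes $d$ itself --- it only uses the congruence $d(Y,\ft_0)\equiv\rho_Y(\ft_0)\pmod 2$ together with the elementary computation $\rho_Y(\ft_0)=\tfrac{n-1}{4}$ from the surgery handlebody $X_n(K)$ (Example~\ref{n-surgery}), so that $d(Y,\ft_0)=0$ already forces $\tfrac{n-1}{4}\in 2\Z$. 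The two routes reach the same congruence $n\equiv 1\pmod 8$, but the paper's is lighter and deliberately axiomatic: it works for any lift $\lift$ of $\rho$ satisfying properties (i)--(iv) (e.g.\ Fr{\o}yshov's Seiberg--Witten invariant), while yours is tied to the Heegaard Floer $d$--invariant and to the machinery of the $V_i$; in exchange, your computation yields strictly more information, namely the exact value of $d(Y,\ft_0)$ and in particular the inequality $d(Y,\ft_0)\le\tfrac{n-1}{4}$. Your $\Spinc$ bookkeeping (identifying $i=0$ with the self-conjugate class and noting $\max\{V_0,V_n\}=V_0$ by monotonicity) is the only delicate point, and you handle it adequately.
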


\subsection{Applications to knot concordance}

Since the double cover of $S^3$ branched over a slice knot bounds a rational homology ball, the above results translate into statements concerning the knot concordance group $\calc$.
For a knot $K$ in $S^3$, let $Y_K$ denote the double cover of $S^3$ branched over $K$.
From Theorem~\ref{linear independence} we immediately obtain the following corollary, recovering results of Livingston and Naik.

\begin{corollary}[\cite{LN1, LN2}]\label{concordance}
Let $\{K_{i}\}_{i\ge 1}$ be a collection of knots such that $\{Y_{K_i}\}_{i\ge 1}$ satisfies the conditions of Theorem~\ref{linear independence}.
Then $\{K_{i}\}_{i\ge 1}$ is a linear independent set in $\calc$, and in particular, $K_i$ has infinite order in $\calc$ for each $i$.
\end{corollary}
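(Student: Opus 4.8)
The plan is to deduce Corollary~\ref{concordance} from Theorem~\ref{linear independence} via the classical branched double cover homomorphism, so that no new geometric input beyond Theorem~\ref{linear independence} is needed. Recall the standard facts: for a knot $K \subset S^3$ the double cover $Y_K$ of $S^3$ branched over $K$ is a rational homology sphere, and in fact a $\Z/2\Z$--homology sphere, since $|H_1(Y_K)| = |\Delta_K(-1)|$ is odd; the construction is additive, $Y_{K \# K'} \cong Y_K \# Y_{K'}$, and sends the concordance inverse $-[K]$ to $-[Y_K]$; and a smooth concordance annulus $A \subset S^3 \times [0,1]$ from $K$ to $K'$ has a double branched cover which is a smooth rational homology cobordism from $Y_K$ to $Y_{K'}$ (the homological conditions being checked by a Mayer--Vietoris or transfer argument). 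Specializing to a slice disk in $B^4$, one sees that $Y_K$ bounds a smooth rational homology ball whenever $K$ is slice. Putting these together, $[K] \mapsto [Y_K]$ descends to a well-defined group homomorphism $\Phi \co \calc \to \Theta^3_\Q$.

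Next I would compose $\Phi$ with the canonical projection $\pi \co \Theta^3_\Q \to \Theta^3_\Q/\Theta^3_\Z$ to obtain a homomorphism $\bar\Phi = \pi \circ \Phi \co \calc \to \Theta^3_\Q/\Theta^3_\Z$, and then argue by contradiction. Suppose $\{K_i\}_{i \ge 1}$ were linearly dependent in $\calc$, so that there are integers $a_1, \dots, a_k$, not all zero, with $\sum_{i=1}^{k} a_i [K_i] = 0$ in $\calc$. Applying $\bar\Phi$ gives $\sum_{i=1}^{k} a_i [Y_{K_i}] = 0$ in $\Theta^3_\Q/\Theta^3_\Z$. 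But by hypothesis the family $\{Y_{K_i}\}_{i \ge 1}$ satisfies the conditions of Theorem~\ref{linear independence}, hence is a linearly independent set in $\Theta^3_\Q/\Theta^3_\Z$; therefore $a_1 = \dots = a_k = 0$, a contradiction. Thus $\{K_i\}_{i \ge 1}$ is linearly independent in $\calc$, and in particular $K_i$ has infinite order for each $i$.

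The main (and in truth essentially the only) obstacle is the well-definedness of $\Phi$, i.e.\ that branched double covers of concordant knots are rationally homology cobordant and those of slice knots bound rational homology balls; this is classical, underlying the Casson--Gordon approach to concordance, and amounts to a homological bookkeeping of the relevant covering spaces. All of the Heegaard Floer or Seiberg--Witten content is already packaged inside Theorem~\ref{linear independence}, which we invoke verbatim; it is worth recording that even the weaker statement of linear independence in $\Theta^3_\Q$ itself would suffice here, since linear independence in the quotient $\Theta^3_\Q/\Theta^3_\Z$ formally implies it.
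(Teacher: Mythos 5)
Your proposal is correct and follows essentially the same route as the paper: the key point in both is that the double branched cover of $B^4$ along a slice disk is a rational homology ball, so a vanishing linear combination in $\calc$ forces the corresponding combination of the $Y_{K_i}$ to vanish in $\Theta^3_\Q$, contradicting Theorem~\ref{linear independence}. Your remark that only linear independence in $\Theta^3_\Q$ (rather than in the quotient $\Theta^3_\Q/\Theta^3_\Z$) is needed is accurate and consistent with the paper's presentation.
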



In fact we get linear independence in $\calc/\calc_1$, where we use $\calc_1$ to denote the subgroup of $\calc$ generated by knots with determinant 1.
In the same spirit, Theorem~\ref{finite order} gives a corollary about knot concordance.

\begin{corollary}\label{concordance finite order}
Let $K$ be a knot in $S^3$ such that $Y_K$ is $n$--surgery on a knot in $S^3$ and $|n| \not\equiv 1 \pmod{8}$. Then $Y$ has infinite order in $\calc$.
\end{corollary}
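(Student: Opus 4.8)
The plan is to derive this formally from Theorem~\ref{finite order} together with two standard facts: that the assignment $K \mapsto Y_K$ is compatible with connected sums, and that the branched double cover of a slice knot bounds a rational homology ball.

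First I would observe that the parity hypothesis needed to invoke Theorem~\ref{finite order} is automatic here. Indeed, $|H_1(Y_K;\Z)|$ equals the determinant $\det K$, which is odd for every knot; on the other hand, if $Y_K$ is $n$--surgery on a knot in $S^3$ then $H_1(Y_K;\Z) \cong \Z/n\Z$, so $|n| = \det K$ is odd. Hence $n$ is odd and, by hypothesis, $|n| \not\equiv 1 \pmod 8$, so Theorem~\ref{finite order} applies and tells us that $Y_K$ has infinite order in $\Theta^3_\Q$.

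Next I would recall that $Y_{J \# J'} \cong Y_J \# Y_{J'}$, and that if $J$ is slice then the double cover of $B^4$ branched over a slice disk for $J$ is a rational homology ball with boundary $Y_J$, so $Y_J = 0$ in $\Theta^3_\Q$. Consequently $[K] \mapsto [Y_K]$ descends to a homomorphism $\calc \to \Theta^3_\Q$. If $K$ had finite order $m$ in $\calc$, then $\#^m K$ would be slice, so $\#^m Y_K \cong Y_{\#^m K}$ would bound a rational homology ball; that is, $m\,[Y_K] = 0$ in $\Theta^3_\Q$, contradicting the conclusion of the previous paragraph. Therefore $K$ has infinite order in $\calc$.

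There is essentially no obstacle beyond Theorem~\ref{finite order} itself, which we are assuming: the descent to knot concordance is a routine application of the branched--double--cover homomorphism. The only point requiring a moment's care is the homological bookkeeping in the first step, ensuring that the oddness of $n$ (required by Theorem~\ref{finite order}) is forced by the hypothesis rather than needing to be assumed separately.
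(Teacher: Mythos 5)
Your proof is correct and follows essentially the same route as the paper: apply Theorem~\ref{finite order} to $Y_K$ and then use the fact that a slice knot's branched double cover bounds a rational homology ball (the double cover of $B^4$ branched over a slice disk) to transfer infinite order from $\Theta^3_\Q$ to $\calc$. Your preliminary observation that the oddness of $n$ is automatic, since $|n| = |H_1(Y_K)| = |\det K|$ is always odd, is a worthwhile point that the paper leaves implicit.
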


Note that this can be used to show certain knots (e.g.~the knot $5_1$) have infinite order in $\calc$ for which the Livingston--Naik obstruction does not apply. (That the knot $5_1$ has infinite order follows more simply from the fact that its signature is non-zero.)

Livingston and Naik proved their results by studying the Casson--Gordon invariants of a knot and the potential metabolizers on sums of $H_1(Y_K)$.
In their argument they use the Casson--Gordon invariant associated to a character $\chi \co H_1(Y_K) \to S^1$.
Since they make crucial use of the information coming from the knot in working with 
the $0$--surgery along the lift of $K$ in $Y_K$, their arguments do not directly give statements about $\Theta^3_\Q$.
However, our work will make essential use of their study of metabolizers (see Section~\ref{algebra}).

\subsection*{Notation}
Throughout all manifolds will be smooth and compact, and unless otherwise stated all homology groups will be calculated with integer coefficients.
We will also use the shorthand 
$mY \defeq \#^m Y = Y \# \dots \# Y$, where there are $m$ copies of $Y$ on the right.
Note that $m$ can be negative, where we interpret $-|m|Y = |m|(-Y)$, and $-Y$ is $Y$ with reversed orientation.
We will use a similar notation for connected sums of knots.

\subsection*{Organization of the paper}
In Section~\ref{s:metabolizers} we recap some properties of metabolizers and linking forms, and prove Proposition~\ref{metabolizers}, the algebraic core of the paper. In Section~\ref{s:proofs}, we prove Theorems~\ref{linear independence} and~\ref{finite order} as well as their corollaries, as a consequence of a slightly more general result, Theorem~\ref{main}.

\subsection*{Acknowledgements}
The authors would like to thank Charles Livingston and Daniel Ruberman for some helpful and encouraging correspondence. We would also like to thank the referee for their very useful comments.
\end{section}

\begin{section}{Correction terms and metabolizers}\label{s:metabolizers}

This section comprises a recap on linking forms, $\rho$--invariants, and correction terms, as well as a long algebraic detour.
The latter is the technical core of the paper.

\subsection{Linking forms, metabolizers, $\rho$--invariants, and correction terms}\label{recap}

The discussion here follows closely~\cite{OwensStrle-Definite}.
In what follows, given a rational homology 3--sphere $Y$, we will identify $H^2(Y)$ with $H_1(Y)$ using Poincar\'e duality, leaving the isomorphism implicit.

Recall that if $Y$ is a closed, oriented 3--manifold, then $\Spinc(Y)$ is an affine space over $H^2(Y)$.
The $\rho$--\emph{invariant} of $Y$ is the map $\rho_Y: \Spinc(Y) \to \Q/2\Z$ defined as:
\[
 \rho_Y(\ft) = \frac{c_1(\fs)^2 - \sigma(W)}4,
\]
where $(W,\fs)$ is any $\spinc$ 4--manifold whose boundary is $(Y,\ft)$.

The \emph{linking form} on $Y$ is a non-degenerate bilinear form $\lambda_Y: H^2(Y) \times H^2(Y) \to \Q/\Z$ defined as follows.
Let $W$ be a 4--manifold with $\partial W = Y$;
since $Y$ is a rational homology sphere, $H^2(Y)$ is torsion, and in particular $H^2(W,Y;\Q) \to H^2(W;\Q)$ is onto.
Given $x,y$ in the image of the map $H^2(W) \to H^2(Y)$, consider two lifts $\bar x, \bar y \in H^2(W,Y;\Q)$, and define
\[
\lambda_Y(x,y) = -\langle \bar x \cup \bar y, [W,Y] \rangle \in \Q/\Z.
\]

Since one can always find a 4-manifold $W$ such that $H^2(W) \to H^2(Y)$ is onto, we can define $\lambda_Y(x,y)$ for each $x,y \in H^2(Y)$.
It is well-known that both $\rho_Y$ and $\lambda_Y$ do not depend on the choice of $W$.
%

In particular, if $W$ is a rational homology ball, since the intersection form is trivial, one sees that $\lambda_Y(x,y) = 0$ for each pair $x,y$ in the image of $H^2(W)\to H^2(Y)$;
call $M_W$ the image of this map.
From the long exact sequence of the pair $(W,Y)$, using Poincar\'e--Lefschetz duality and the universal coefficient theorem, one also sees that the order of $H^2(Y)$ is the square of the order of $M_W$.
By definition, $M_W$ is a metabolizer for the linking form $\lambda_Y$.
It follows from the non-degeneracy of $\lambda_Y$ that $H^2(Y)/M_W$ is in fact isomorphic to $M_W$ (see \cite[Theorem 2.7]{LN2}).

For convenience, we will now work only with $\Z/2\Z$--homology spheres, usually named $Y$.
The advantage of working with a $\Z/2\Z$--homology sphere $Y$ is that the unique conjugation-invariant $\spinc$ structure, i.e., the unique spin structure on $Y$, serves as an origin for the affine identification of $\Spinc(Y)$ with $H^2(Y)$.
We will denote the $\spinc$ structure associated to the class $x\in H^2(Y)$ under this identification as $\ft_x$.
Note that $c_1(\ft_x) = 2x$.
(Since $2$ is invertible, the first Chern class induces an isomorphism.)

The linking pairing is always determined by the $\rho$--invariant via a polarization formula:
\[
\lambda_Y(x,y) = -\frac{\rho_Y(\ft_x + \ft_y) - \rho_Y(\ft_x) - \rho_Y(\ft_y) + \rho_Y(\ft_0)}{2} \in \Q/\Z.
\]
Conversely, one can see that $\lambda_Y$ determines $\rho_Y$.
Choose a simply connected, \emph{spin} 4--manifold $W$ bounding $Y$ (such a manifold does indeed exist; see~\cite{Kaplan}).
This implies that $H^2(W) \to H^2(Y)$ is onto.
Since $H^2(W)$ is torsion-free, $\spinc$ structures on $W$ are determined by their first Chern class; since $W$ is spin, $\ft_0\in\Spinc(Y)$ extends to the $\spinc$ structure $\fs_0$ with $c_1(\fs_0) = 0$.
Fix $x\in H^2(Y)$, and choose an element $x'$ that maps to $x$ in $H^2(W) \rightarrow H^2(Y)$.
Let $\fs_{x'}$ be an extension of $\ft_{x}$ to $W$, such that $c_1(\fs_{x'}) = 2x'$;
as above, we let $\bar x \in H^2(W,Y;\Q)$ be the preimage of $x'$, viewed as a rational cohomology class, under the canonical map $H^2(W,Y;\Q) \to H^2(W;\Q)$.
We can then compute:
\[
\rho_Y(\ft_x) - \rho_Y(\ft_0) = \frac{c_1(\fs_{x'})^2 - \sigma(W)}4 - \frac{c_1(\fs_0)^2 - \sigma(W)}4 = x'\cdot x' = \bar x\cdot \bar x  =  -\lambda_Y(x,x).
\]
(This is more algebraic in nature than what appears from this exposition; see~\cite{OwensStrle-Definite} for more details.)
Now observe that if we chose a different $x''$ in $H^2(W)$ mapping to $x$, with corresponding extension $\fs_{x''}$, then $\fs_{x''} = \fs_{x'} + h$, for $h \in H^2(W;\Z)$, rather than $h \in H^2(W;\Q)$;
the first Chern class of $\fs_{x''}$ is now $c_1(\fs_{x''}) = c_1(\fs_{x'}) + 2h = 2x'+2h$.
Since $W$ is spin, $h\cdot h$ is an even integer, and therefore the quantity
\[
c_1(\fs_{x''})^2- c_1(\fs_{x'})^2 = (2x'+ 2h)^2 - (2x')^2 = 8(x'\cdot h) + 4h\cdot h
\]
is divisible by 8.
It follows that the difference $\rho_Y(\ft_x) - \rho_Y(\ft_0)$ is indeed well-defined modulo 2, rather than modulo 1.

In fact, since every non-degenerate linking pairing $\lambda \co H\times H \to \Q/\Z$ admits an even presentation~\cite[Theorem 6]{Wall-linking}, the argument above shows that the associated quadratic form $\rho(x) \defeq \lambda(x,x)$ is well defined as a map $H \to \Q/2\Z$.

\begin{example}\label{n-surgery}
Let $K$ be a knot in $S^3$, and let $Y = S^3_n(K)$ be the 3--manifold obtained as $n$--surgery along $K$, for $n>0$ odd.
For instance, when $K$ is the unknot, $S^3_n(K) = L(n,n-1) = -L(n,1)$.
Then, using the surgery handlebody $X_n(K)$ obtained by attaching a single 2--handle to $B^4$ along $K$, with framing $n$, one easily
computes that $\rho_Y(\ft_0) = \frac{n-1}4$.

The formula also holds true when $n$ is even, provided we choose the spin structure $\ft_0$ to be the one that extends to a $\spinc$ structure $\fs$ on $X_n(K)$, such that $\langle c_1(\fs), A \rangle = n$, where $A$ is a generator of $H_2(X_n(K))$.
\end{example}

Translating in terms of the $\rho$--invariant the metabolizer condition, we see that if $W$ is a rational homology ball bounding $Y$, then $\rho_Y(\ft_x) = 0$ for each $x\in M_W$. 
(Here we are using the identification $\Spinc(Y) = H^2(Y)$ as above; $M_W$ corresponds to the set of $\spinc$ structures on $Y$ that extend to $W$.)

We now define what it means for $\lift$ to be a lift of $\rho$ that is invariant under rational homology $\spinc$ cobordism.
For the sake of brevity, we will just speak of $\lift$ as a lift of $\rho$.

\begin{definition}
We say that $\lift$ is a \emph{lift} of $\rho$ if it is a map from $\spinc$ 3--manifolds to $\Q$; it assigns to each $\spinc$ rational homology 3--sphere $(Y,\ft)$ a rational number $\lift(Y,\ft)$, such that:
\begin{itemize}
\item[(i)] $\lift(Y,\ft) \equiv \rho_Y(\ft) \pmod 2$,
\item[(ii)] $\lift(Y,\ft) = \lift(Y,\bar\ft)$, where $\bar\ft$ is the conjugate of $\ft$,
\item[(iii)] $\lift(Y\#Y',\ft\#\ft') = \lift(Y,\ft) + \lift(Y',\ft')$, and
\item[(iv)] if $W$ is a rational homology cobordism from $Y$ to $Y'$, and $\fs$ is a $\spinc$ structure on $W$, then $\lift(Y,\fs|_Y) = \lift(Y',\fs|_{Y'})$.
\end{itemize}
\end{definition}

\begin{proposition}\label{p:formal_d}
Any lift $\lift$ of the $\rho$--invariant as above satisfies the following properties:
\begin{enumerate}
\item $\lift(S^3,\ft_0) = 0$, where $\ft_0$ is the unique $\spinc$ structure on $S^3$;
\item if $W$ is a rational homology ball with boundary $Y$, and $\fs\in\Spinc(W)$, then $\lift(Y,\fs|_Y) = 0$.
\end{enumerate}
\end{proposition}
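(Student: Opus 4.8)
The plan is to deduce both statements purely formally, from properties (iii) and (iv) in the definition of a lift (properties (i) and (ii) are not needed). For item (1), I would use that $S^3 \cong S^3 \# S^3$ together with the fact that $\Spinc(S^3)$ is a single point, so that the connected-sum spin$^c$ structure $\ft_0 \# \ft_0$ is again $\ft_0$. Property (iii) applied to this decomposition reads $\lift(S^3,\ft_0) = \lift(S^3,\ft_0) + \lift(S^3,\ft_0)$, that is, $\lift(S^3,\ft_0) = 2\,\lift(S^3,\ft_0)$, and hence $\lift(S^3,\ft_0) = 0$. (Applying property (iv) to the cylinder $S^3 \times [0,1]$ yields no information, so some genuinely non-trivial input is needed; the connected-sum decomposition provides it.)

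For item (2), the idea is to realize a rational homology ball as a rational homology cobordism from $S^3$ and then invoke property (iv) together with item (1). Given a rational homology ball $W$ with $\partial W = Y$, I would delete the interior of a smoothly embedded $4$--ball $B \subset \Int W$ and set $W' = W \setminus \Int B$; then $\partial W' = S^3 \sqcup Y$, and, composing with an orientation-reversing self-diffeomorphism of $S^3$ if necessary, we may read $W'$ as a cobordism from $S^3$ to $Y$. To see it is a \emph{rational homology} cobordism: a Mayer--Vietoris argument for the decomposition $W = W' \cup_{S^3} B$, using that $\widetilde H_*(B;\Q) = 0$ and $\widetilde H_*(W;\Q) = 0$, shows the inclusion $S^3 \hookrightarrow W'$ is an isomorphism on $\Q$--homology, so that $H_*(W',S^3;\Q) = 0$; and then Poincar\'e--Lefschetz duality for the compact oriented $4$--manifold $W'$ gives $H_k(W',Y;\Q) \cong H^{4-k}(W',S^3;\Q) = 0$ for all $k$.

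With $W'$ a rational homology cobordism from $S^3$ to $Y$ in hand, the conclusion is immediate: a spin$^c$ structure $\fs \in \Spinc(W)$ restricts to one on $W'$, which restricts to $\fs|_Y$ on $Y$ and to $\ft_0$ on $S^3$ (the only spin$^c$ structure there); property (iv) applied to $W'$ then gives $\lift(S^3,\ft_0) = \lift(Y,\fs|_Y)$, and the left-hand side vanishes by item (1), so $\lift(Y,\fs|_Y) = 0$. The only point requiring any care is the homological verification that $W'$ is a rational homology cobordism — keeping track of which relative groups must vanish, and applying Lefschetz duality to the correct boundary component — but this is entirely routine, so I do not anticipate a real obstacle.
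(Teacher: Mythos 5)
Your proof is correct and follows essentially the same route as the paper: item (1) via $S^3 \cong S^3\#S^3$ and additivity, and item (2) by puncturing $W$ to get a rational homology cobordism from $S^3$ and applying invariance under such cobordisms. The only difference is that you spell out the Mayer--Vietoris and Lefschetz-duality verification that the punctured ball is a rational homology cobordism, which the paper leaves implicit.
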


\begin{proof}
Both properties follow easily from the last two conditions imposed on $\lift$:
\begin{enumerate}
\item since $(S^3\#S^3, \ft_0\#\ft_0) = (S^3,\ft_0)$, (iii) above implies $2\lift(S^3,\ft_0) = \lift(S^3,\ft_0)$, and hence the latter vanishes;
\item remove a small open ball from $W$, thus obtaining a rational homology cobordism $W'$ from $S^3$ to $Y$, and let $\fs' = \fs|_{W'}$; then $\fs'|_Y = \fs|_Y$ and $\fs'|_{S^3} = \ft_0$; from (iv) above and the previous point, $\lift(Y,\fs|_Y) = \lift(S^3,\ft_0) = 0$.\qedhere
\end{enumerate}
\end{proof}

Such lifts do indeed exist.
Examples are provided by correction terms in Seiberg--Witten theory \cite{Froyshov} or, equivalently, in Heegaard Floer theory \cite{OSz}.
From now on, we choose to adopt the Heegaard Floer setup, and use the lift $\lift(Y,\ft) = d(Y,\ft)$;
however, we remark that we are only using properties (i)--(iv) above, rather than a specific lift.

We now give a lemma about extensions of spin structures.

\begin{lemma}[cf. {\cite[Proposition 4.2]{Stipsicz}} or {\cite[Theorem 6]{KL}}]\label{l:spin-extension}
If $Y$ is a $\Z/2\Z$--homology sphere, then the $\spinc$ structure $\t_0$ on $Y$ that is also spin extends to all rational homology ball fillings of $Y$.
\end{lemma}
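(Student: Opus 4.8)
The plan is to reduce the claim to a statement about first Chern classes, and then settle that statement by a short coset computation in the $\spinc$ torsor. Since $Y$ is a $\Z/2\Z$--homology sphere, $H^2(Y)\cong H_1(Y)$ has odd order, so multiplication by $2$ is an automorphism of $H^2(Y)$; as $\Spinc(Y)$ is a torsor over $H^2(Y)$ with $c_1(\t+h)=c_1(\t)+2h$, the first Chern class is injective on $\Spinc(Y)$, and $\t_0$ is characterized as the unique $\spinc$ structure with $c_1(\t_0)=0$. Hence, given a rational homology ball $W$ with $\partial W=Y$, it is enough to produce a $\spinc$ structure $\fs$ on $W$ with $c_1(\fs)|_Y=0$: then $c_1(\fs|_Y)=c_1(\fs)|_Y=0$ forces $\fs|_Y=\t_0$ by injectivity, so $\t_0$ extends.

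To produce such an $\fs$, I would start from any $\spinc$ structure $\fs_0$ on $W$ (a compact oriented $4$--manifold always admits one), and use that every $\spinc$ structure on $W$ has the form $\fs_0+h$ for $h\in H^2(W;\Z)$, with $c_1(\fs_0+h)=c_1(\fs_0)+2h$. Restricting to $Y$, the set of classes $c_1(\fs)|_Y$, as $\fs$ ranges over $\Spinc(W)$, is exactly the coset $c_1(\fs_0)|_Y+2M$, where $M\subseteq H^2(Y)$ is the image of the restriction map $H^2(W)\to H^2(Y)$. Since $M$ is a subgroup of the odd-order group $H^2(Y)$, multiplication by $2$ is an automorphism of $M$, so $2M=M$; and $c_1(\fs_0)|_Y$ lies in $M$ by the definition of $M$. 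Therefore this coset equals $M$, which contains $0$, and the required $\fs$ exists.

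I do not expect a serious obstacle: the only external input needed is that an arbitrary compact oriented $4$--manifold carries a $\spinc$ structure (equivalently, that its third integral Stiefel--Whitney class vanishes), which I would simply cite. It is worth noting that the argument uses only that $Y$ is a $\Z/2\Z$--homology sphere and nothing about $W$ being a rational homology ball, so in fact $\t_0$ extends over \emph{every} oriented filling of $Y$; the metabolizer language recalled above, and the identity $|H^2(Y)|=|M_W|^2$, are not needed for this particular statement.
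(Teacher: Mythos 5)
Your argument is correct, but it takes a different route from the paper's. The paper's proof observes that the image $M_W$ of the restriction map $\Spinc(W)\to\Spinc(Y)$ is conjugation-invariant and has odd cardinality (being in bijection with a subgroup of the odd-order group $H^2(Y)$), hence contains a conjugation-fixed point, which must be the unique spin structure $\t_0$. You instead identify $\Spinc(Y)$ with $H^2(Y)$ via the injective map $c_1$ and compute that the set of restrictions $c_1(\fs)|_Y$ is the coset $c_1(\fs_0)|_Y+2M=M\ni 0$, using that multiplication by $2$ is an automorphism of the odd-order subgroup $M$ and that $c_1(\fs_0)|_Y\in M$. Both arguments are short and exploit only the oddness of $|H^2(Y)|$; yours trades the conjugation-equivariance/fixed-point step for the existence of some $\spinc$ structure on $W$ (true for every oriented $4$--manifold, but an external input the paper's proof does not need). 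Your closing observation is also accurate: neither argument really uses that $W$ is a rational homology ball, and indeed $\t_0$ extends over every oriented filling of a $\Z/2\Z$--homology sphere; the paper states the lemma only in the form it later applies.
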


\begin{proof}
Suppose $W$ is a rational homology ball filling of $Y$; then, as noted above, $H^2(Y)$ has order $m^2$ for some $m$, where the image $M_W$ of the restriction map $\Spinc(W)\to\Spinc(Y)$ has cardinality $m$.
(Again, we will silently identify $\Spinc(Y)$ with $H^2(Y)$.)

Moreover, the restriction map is conjugation-equivariant, and so $M_W$ is invariant under conjugation;
since $M_W$ has odd cardinality, there is a fixed point under conjugation; that is, $M_W$ contains a spin structure.
Since $Y$ is a $\Z/2\Z$--homology sphere, there is a unique spin structure $\t_0$, which therefore belongs to $M_W$.
This proves that $\t_0$ extends to all rational homology ball fillings of $Y$, as claimed.
\end{proof}

We summarize the previous discussion in the following catch-all statement,
where we have applied Poincar\'e duality so as to now state things in terms of homology.
\begin{theorem}\label{summaryGL}
Let $Y$ be a $\Z/2\Z$--homology sphere that bounds a rational homology ball, and let $G = H_1(Y)$.
Then the order of $G$ is a square, and there exists a metabolizer $M \subset G$  such that $d(Y,\ft_x)= 0$ for each $x\in M$.
That is, there is a subgroup $M\subset G$ such that $G/M \cong M$ and $d(Y,\ft_x) = 0$ for each $x\in M$;
this implies in turn that $\rho_Y(\ft_x) \equiv 0 \pmod{2}$ and $\lambda_Y(x,y) \equiv 0 \pmod 1$ for each $x,y\in M$.
\end{theorem}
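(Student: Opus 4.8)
The statement is a bookkeeping corollary of the facts assembled in Section~\ref{recap}, together with Proposition~\ref{p:formal_d} and Lemma~\ref{l:spin-extension}, so the plan is simply to package them. First I would fix a rational homology ball $W$ with $\partial W = Y$ and set $M \defeq M_W$, the image of the restriction map $H^2(W) \to H^2(Y)$, identified with a subgroup of $G = H_1(Y)$ via Poincar\'e duality. The discussion in Section~\ref{recap} already supplies the purely algebraic content: $M$ is a metabolizer for $\lambda_Y$, the order of $G$ equals $|M|^2$ (so in particular it is a square), $\lambda_Y$ vanishes on $M$, and $G/M \cong M$ by non-degeneracy of $\lambda_Y$, citing \cite[Theorem 2.7]{LN2}.

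The next step is to translate ``extends over $W$'' into the homological language $\Spinc(Y) = H^2(Y) = G$. Here the one point that needs care is that the subset of $\Spinc(Y)$ consisting of structures extending over $W$ is a priori only an affine subspace; to identify it with $\{\ft_x : x\in M\}$ I would invoke Lemma~\ref{l:spin-extension}, which says that the spin structure $\ft_0$, the chosen origin of the affine identification, does extend over $W$. With this in hand, for every $x\in M$ the structure $\ft_x$ is the restriction of some $\fs\in\Spinc(W)$, so Proposition~\ref{p:formal_d}(2) (applied to the lift $d$) gives $d(Y,\ft_x)=0$.

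From $d(Y,\ft_x)=0$, property (i) of a lift yields $\rho_Y(\ft_x)\equiv 0\pmod 2$ for all $x\in M$. Finally, since $M$ is a subgroup, both $0$ and $x+y$ lie in $M$ whenever $x,y$ do, so substituting these vanishings into the polarization formula expressing $\lambda_Y$ in terms of $\rho_Y$ shows $\lambda_Y(x,y)\equiv 0\pmod 1$ on $M$ (which also re-derives the metabolizer vanishing of $\lambda_Y$ noted above). No step here is genuinely difficult; the only subtlety worth flagging is the compatibility of the identifications $\Spinc(Y)\leftrightarrow H^2(Y)\leftrightarrow H_1(Y)$ with the choice of $\ft_0$ as origin, which is exactly what Lemma~\ref{l:spin-extension} secures, and which is also the reason the hypothesis that $Y$ is a $\Z/2\Z$--homology sphere cannot be dropped.
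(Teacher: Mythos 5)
Your proposal is correct and follows exactly the route the paper intends: the theorem is explicitly presented as a summary of the preceding discussion, with $M = M_W$, the square-order and metabolizer facts from the linking-form recap, Lemma~\ref{l:spin-extension} to pin down the affine subspace of extendable $\spinc$ structures as $\{\ft_x : x \in M\}$, and Proposition~\ref{p:formal_d}(2) for the vanishing of $d$. Your flagging of the role of $\ft_0$ as origin is precisely the one nontrivial point, and you handle it as the paper does.
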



We will be interested in applying Theorem~\ref{summaryGL} to connected sums ${2m}Y$ with $Y$ satisfying certain homological conditions.
In Section~\ref{algebra} we will show that the existence of such a metabolizer for ${2m}Y$ puts strong restrictions on the function $\bar{d}: G\to \Q$ defined by:
\[
 \bar{d}: g \mapsto d(Y,\ft_g) - d(Y,\ft_0).
\]
(The notation is borrowed from~\cite{HLR}.)
Then we will show that for the 3--manifolds under consideration these restrictions on $\bar{d}$ are not satisfied.

We conclude this recap with an algebraic lemma that applies to linking forms.

\begin{lemma}\label{linking}
Let $p$ be an odd prime and $n$ an odd integer, and $H = \Z/p^n\Z$;
let $\rho: H \to \Q/2\Z$ be a quadratic form associated to the bilinear, non-degenerate form $\lambda: H\times H \to \Q/\Z$.
Then $\rho$ identically vanishes on the subgroup of order $p^{(n-1)/2}$, but not on the subgroup of order $p^{(n+1)/2}$.
\end{lemma}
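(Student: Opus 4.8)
The plan is to classify the possible quadratic forms $\rho$ up to isomorphism and then read off the vanishing behavior directly. Since $H = \Z/p^n\Z$ is cyclic of odd prime power order, its automorphism group acts transitively on generators, and a standard computation (see Wall~\cite{Wall-linking}) shows that every non-degenerate bilinear form $\lambda$ on $H$ is isomorphic to $\lambda_a(x,y) = axy/p^n$ for some unit $a\in(\Z/p^n\Z)^\times$, and moreover that $\lambda_a\cong\lambda_{a'}$ iff $a/a'$ is a square mod $p^n$; thus there are exactly two isomorphism classes, distinguished by the Legendre symbol $\left(\tfrac{a}{p}\right)$. First I would fix such a model: write $H = \langle g\rangle$, so that every element is $kg$ for $k\in\Z/p^n\Z$, with $\lambda(kg,lg) = akl/p^n$ and hence $\rho(kg) = ak^2/p^n \in \Q/2\Z$. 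Note here we use that $\rho$ is the quadratic \emph{refinement} with values in $\Q/2\Z$ guaranteed by the even-presentation discussion preceding the lemma; this is what makes $\rho(kg)$ well defined beyond $\Q/\Z$, but for the vanishing statement it suffices to track $\rho \bmod 1 = \lambda(x,x)$.

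Next I would identify the two relevant subgroups. Since $n$ is odd, write $n = 2r+1$, so $(n-1)/2 = r$ and $(n+1)/2 = r+1$. The unique subgroup of order $p^r$ is $H_r = p^{r+1}H = \langle p^{r+1}g\rangle$, and the unique subgroup of order $p^{r+1}$ is $H_{r+1} = p^{r}H = \langle p^{r}g\rangle$. For an element $x = p^{r+1}kg \in H_r$, we get
\[
\rho(x) = \frac{a\,p^{2r+2}k^2}{p^{2r+1}} = a p k^2 \in \Q/2\Z,
\]
and since $ap k^2$ is an integer, $\rho(x) \equiv 0 \pmod 1$; in fact, because $apk^2$ is divisible by the odd prime $p$ it is in particular even or odd according to parity of $apk^2$, but modulo $2$ one checks it still lands in $2\Z$ after using that $\rho$ only takes the listed value modulo $2$ — more carefully, $\lambda(x,x) = 0 \in \Q/\Z$, and the $\Q/2\Z$ lift of the zero element of $\Q/\Z$ along the quadratic form is $0$ because $\rho$ vanishes on $0$ and is additive-up-to-$\lambda$-correction on the cyclic group; so $\rho|_{H_r} \equiv 0$. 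For $x = p^{r}g \in H_{r+1}$, the generator, we compute
\[
\rho(p^r g) = \frac{a\, p^{2r}}{p^{2r+1}} = \frac{a}{p} \in \Q/2\Z,
\]
which is nonzero in $\Q/\Z$ since $a$ is a unit (so $p\nmid a$); hence $\rho$ does not vanish on $H_{r+1}$.

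The main obstacle, and the only genuinely delicate point, is the last-paragraph claim that $\rho$ vanishes on all of $H_r$ and not merely modulo $1$ — i.e.\ pinning down the $\Q/2\Z$-valued lift rather than just the $\Q/\Z$-valued form $\lambda$. For the \emph{non}vanishing half this is a non-issue: $\lambda(p^rg,p^rg) = a/p \neq 0$ in $\Q/\Z$ already forces $\rho(p^rg)\neq 0$ in $\Q/2\Z$. For the vanishing half, I would argue that $\rho$ restricted to the subgroup $H_r$ is itself a quadratic form refining the \emph{trivial} bilinear form on $H_r$ (since $\lambda$ vanishes on $H_r\times H_r$, as it is a subgroup of a metabolizer — indeed $|H_r|^2 = p^{2r}$ divides $|H| = p^{2r+1}$ fails, so instead I use directly that $\lambda(p^{r+1}kg,p^{r+1}lg) = apk\ell \in\Z$). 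A quadratic refinement of the zero form on a group of odd order is a homomorphism to the $2$-torsion-free group $\Q/2\Z$, hence by odd order is identically zero. Assembling these computations gives both assertions. I would close by remarking that this lemma is exactly the input needed to force the failure of the metabolizer condition for connected sums in Section~\ref{algebra}.
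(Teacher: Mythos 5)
Your overall strategy is the same as the paper's: put $\lambda$ in the standard form $\lambda(kg,lg)=akl/p^n$ with $a$ a unit, and compute $\rho$ directly on the two subgroups. The non-vanishing half is correct and matches the paper: $\rho(p^rg)$ reduces to $a/p\neq 0$ in $\Q/\Z$, which already forces non-vanishing in $\Q/2\Z$.

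The vanishing half, however, contains a step that fails as written. You conclude with: ``a quadratic refinement of the zero form on a group of odd order is a homomorphism to the $2$-torsion-free group $\Q/2\Z$, hence by odd order is identically zero.'' But $\Q/2\Z$ is not $2$-torsion-free (the class of $1$ is $2$-torsion), and, more to the point, $\Q/2\Z\cong\Q/\Z$ contains elements of every odd order, so a homomorphism from an odd-order group to $\Q/2\Z$ need not vanish (e.g.\ $k\mapsto 2k/p^r$ on $\Z/p^r\Z$). So neither the stated hypothesis nor the stated inference is right. The argument is salvageable using an ingredient you already have on the page: since $\lambda(x,x)=0\in\Q/\Z$ for $x\in H_r$ (your computation $apk\ell\in\Z$), the homomorphism $\rho|_{H_r}$ actually takes values in $\ker(\Q/2\Z\to\Q/\Z)=\{0,1\}\cong\Z/2\Z$, and a homomorphism from a group of \emph{odd} order into $\Z/2\Z$ is trivial. (Equivalently, as in the paper's setup, one observes that $\rho$ takes values in the $p^n$-torsion of $\Q/2\Z$, which maps isomorphically onto the $p^n$-torsion of $\Q/\Z$, so the mod-$1$ computation already determines $\rho$.) Your earlier parenthetical attempt at this point --- ``the $\Q/2\Z$ lift of the zero element \dots is $0$ because $\rho$ vanishes on $0$ and is additive'' --- has the same gap: additivity plus $\rho(0)=0$ alone does not rule out the nontrivial lift $1$; you must also use that the target of the restricted homomorphism is $2$-torsion while the source has odd order.
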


Indeed, it is well-known that under the assumptions of the lemma, there are only two non-degenerate pairings $\lambda$ on $\Z/p^n\Z$ up to (pairing-preserving) homomorphism (for instance, see~\cite{Wall-linking}).
In fact, when $p \equiv 3 \pmod 4$, the two non-degenerate pairings are $\lambda_\pm(x,y) = \pm xy/p^n \in \Q/\Z$.

\begin{proof}
From now on we identify the set of elements in $\Q/\Z$ killed by $p^n$ with $\Z/p^n\Z$.
Note that the image of $\lambda$ falls in this subgroup.

Since $H$ is cyclic, $\lambda$ is determined by the value $c:=\lambda(1,1)$; namely, we have $\lambda(x,y) = cxy$.
Since the pairing is non-degenerate, $c$ is invertible in $\Z/p^n\Z$.

Let $s = (n-1)/2$;
recall that the subgroup of $H$ of order $p^{s}$ consists of all elements of the form $p^{s+1}x$, and that the subgroup of order $p^{s+1}$ consists of elements of the form $p^sx$.
We have:
\begin{align*}
\rho(p^{s+1}x) &= \lambda(p^{s+1}x, p^{s+1}x) = p^{2s+2}\lambda(x,x) = p^n \cdot px^2c = 0,\\
\rho(p^s) &= \lambda(p^s,p^s) = p^{2s}\lambda(1,1) = p^{n-1}\cdot c \neq 0,
\end{align*}
since $c$ is invertible.
\end{proof}

\subsection{Algebraic detour}\label{algebra}

In this section we give a purely algebraic result adapted from an argument of Livingston and Naik \cite[Section 3]{LN2}.
We also borrow the algebraic formalism from the Appendix of \cite{HLR}, which contains a proof of the analogue of Proposition~\ref{metabolizers} for the case $n=2$ (their argument is essentially the inductive step in the proof of the proposition). 
While we are most interested in the behavior of correction terms under connected sums, we consider more generally arbitrary functions $f \co \Z/p^n\Z \rightarrow \Q$ that satisfy some of the properties of correction terms (when thought of as the function $\bar d \co H_1(Y) \rightarrow \Q$ defined above). 
Hence we will impose the assumptions that $f(0)=0$ and $f(-g) = f(g)$ for $g \in \Z/p^n\Z$.
We extend such an $f$ to a function $f^{(m')} \co (\Z/p^n\Z)^{\oplus m'} \rightarrow \Q$ by $f^{(m')}(g_1,g_2, \dots, g_{m'}) \defeq f(g_1)+f(g_2)+ \dots + f(g_{m'})$, for $g_i \in \Z/p^n\Z$.
We will restrict to the case when $p$ is a prime congruent to 3 modulo 4, $n$ is odd, and $m' = 2m$ is even. 
We will also assume that $G = (\Z/p^n\Z)^{\oplus 2m}$ is equipped with a non-degenerate pairing $\lambda$, compatible with $f^{(2m)}$, in the sense that $f^{(2m)}(g_1,\dots,g_{2m}) \equiv \sum_i \lambda(g_i,g_i) \pmod 2$.

\begin{proposition}\label{metabolizers}
Let $f \co \Z/p^n\Z \rightarrow \Q$ be a function as above.
If $f^{(2m)}$ vanishes on a metabolizer $M$ for $(G,\lambda)$, then $f$ is identically zero on the subgroup of $\Z/p^n\Z$ generated by $p^{(n-1)/2}$.
\end{proposition}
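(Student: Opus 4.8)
The plan is to argue by induction on $m$, following the Livingston--Naik strategy for metabolizers in sums of cyclic $p$-groups. The base case $m=1$ is essentially Lemma~\ref{linking} together with a direct analysis of which subgroups of $(\Z/p^n\Z)^{\oplus 2}$ can be metabolizers: since $p \equiv 3 \pmod 4$, the two non-degenerate forms on $\Z/p^n\Z$ are $\lambda_\pm(x,y) = \pm xy/p^n$, and a form on $(\Z/p^n\Z)^{\oplus 2}$ is one of $\lambda_+ \oplus \lambda_+$, $\lambda_+ \oplus \lambda_-$, or $\lambda_- \oplus \lambda_-$. Only the case with opposite signs admits a metabolizer (this is the point where $p \equiv 3 \pmod 4$ enters: $-1$ is not a square mod $p$, so $\lambda_+ \oplus \lambda_+$ is not metabolic), and its metabolizers are generated by elements of the form $(a, ua)$ with $u$ a unit, possibly together with lower-order corrections. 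Feeding such a generator into $f^{(2)}$ and using $f(0)=0$, $f(-g)=f(g)$ together with the compatibility with $\lambda$ forces $f$ to vanish on the subgroup generated by $p^{(n-1)/2}$; this is exactly the inductive-step argument already carried out in the appendix of \cite{HLR}.

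For the inductive step, suppose the result holds for $m-1$ and let $M$ be a metabolizer for $(G,\lambda)$ with $G = (\Z/p^n\Z)^{\oplus 2m}$ on which $f^{(2m)}$ vanishes. The idea is to locate inside $M$ a "small" element — one whose nonzero coordinates all lie in the subgroup generated by $p^{(n-1)/2}$, or better yet an element supported in few coordinates — use it to peel off a rank-one or rank-two piece, and then produce from $M$ an induced metabolizer on the remaining summand $(\Z/p^n\Z)^{\oplus (2m-2)}$ on which the correspondingly restricted function still vanishes. Concretely, one considers the $p$-adic valuations of the coordinates of elements of $M$: because $|M|^2 = |G| = p^{2mn}$, $M$ must contain elements of order exactly $p^n$, and a pigeonhole/linear-algebra argument over $\Z/p^n\Z$ (the kind of metabolizer manipulation in \cite[Section 3]{LN2}) lets one split off a hyperbolic pair and pass to the orthogonal complement, which is itself a metabolizer for a non-degenerate form on a smaller group. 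The vanishing of $f^{(2m)}$ on $M$, combined with the polarization identity for $\rho$ and the compatibility hypothesis $f^{(2m)}(g) \equiv \sum_i \lambda(g_i,g_i) \pmod 2$, transfers to vanishing of $f^{(2m-2)}$ on the induced metabolizer, after absorbing the contribution of the split-off coordinates.

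The main obstacle — and the place the argument is genuinely delicate rather than formal — is the bookkeeping in the splitting step: one must ensure that when a summand is split off, the element of $M$ used to do so is "aligned" with a coordinate axis finely enough that the restriction of $f^{(2m)}$ to the complementary directions really is an $f^{(2m-2)}$ for the \emph{same} $f$, not some twisted version, and that the induced pairing on the complement is still non-degenerate with a metabolizer. This requires choosing bases of $\Z/p^n\Z$-summands carefully and tracking how unit coordinates can be normalized; the parity constraint (that everything is controlled mod $2$ via $\lambda$) is what makes the normalization harmless, since adjusting a coordinate by a unit does not change $f$ by the evenness hypothesis and changes $\sum_i \lambda(g_i,g_i)$ only by something whose mod-$2$ class is still determined. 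Once the splitting is set up correctly, the induction closes and one concludes that $f$ vanishes on $\langle p^{(n-1)/2}\rangle$.
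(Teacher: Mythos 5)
There is a genuine gap, and it sits exactly where you flag ``the main obstacle'': the splitting step in your induction on $m$ cannot be repaired in the way you suggest. The function $f$ is an \emph{arbitrary} function $\Z/p^n\Z\to\Q$ subject only to $f(0)=0$, $f(-g)=f(g)$, and compatibility with $\lambda$ modulo $2$; it is not determined by $\lambda$ and, crucially, it is \emph{not} invariant under multiplication by units other than $\pm 1$. So your claim that ``adjusting a coordinate by a unit does not change $f$ by the evenness hypothesis'' is false: evenness gives $f(-g)=f(g)$ only. Since a metabolizer of $(\Z/p^n\Z)^{\oplus 2m}$ typically contains no element supported on few coordinates (already for $m=1$ the metabolizers are ``diagonal'', e.g.\ $\{(x,ux)\}$ with $u$ a unit), splitting off a hyperbolic pair forces a change of basis mixing coordinates, after which the restricted function is genuinely not $f^{(2m-2)}$ for the same $f$. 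Even your base case has this problem: from a generator $(a,ua)$ you only get $f(a)+f(ua)=0$, i.e.\ $f(a)=-f(ua)$, which does not yield $f(a)=0$ without an additional argument exploiting that the relevant unit group modulo $\{\pm1\}$ has \emph{odd} order (this is where $p\equiv 3\pmod 4$ really enters, via $(p-1)/2$ odd, not only via $-1$ being a nonsquare).

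The paper's proof avoids induction on $m$ entirely. It inducts instead on the subgroups $H_{n-r}\subset\Z/p^n\Z$, keeping the coordinate decomposition fixed throughout. A Gauss--Jordan normal form for generators of $M$ (Lemma~\ref{generating set}) together with the symmetry $k_j=k_{n-j}$ of Lemma~\ref{k_j} produces a single element $z\in M$ whose coordinates equal to $p^{r-1}$ outnumber all the others; the condition $f^{(2m)}(z)=0$ is then fed into an equivariant counting argument over the odd-order cyclic group $(\Z/p^{n-r+1}\Z)^*/\{\pm1\}$, identified with $\Q[t]/(t^{\overline q}-1)$, and the positivity of the coefficients plus the oddness of $\overline q$ forces all the values $f_i=f(\pm a^i p^{r-1})$ to vanish simultaneously. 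That averaging over the unit group is precisely the device that handles the failure of $f$ to be unit-invariant, and it has no counterpart in your outline. To fix your proposal you would need to replace the hyperbolic-splitting step with an argument of this kind (or show that $f$ is constant on unit orbits, which is not a hypothesis and is false in general).
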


Note that if $n=1$ this means that $f$ is identically zero on the whole group $\Z/p\Z$.

The proof will be by induction on the order of the subgroup of $\Z/p^n\Z$.
The base case is the vanishing of $f$ on the trivial subgroup; then we will then show that if $f$ vanishes on a subgroup of order $p^r < p^{(n+1)/2}$, then it also vanishes on the subgroup of order $p^{r+1}$.

We begin with two preparatory lemmas, and with some notation.

\begin{lemma}\label{generating set}
After perhaps permuting the order of the summands of $(\Z/p^n\Z)^{\oplus 2m}$, $M$ is generated by $\ell$ elements of the form
\[
\omega_i = (\underbrace{0,\dots,0}_{i-1\textrm{ zeros}},p^{a_i},\omega_{i,i+1},\dots,\omega_{i,\ell},\omega_{i,\ell+1},\dots,\omega_{i,2m})
\]
for $1\leq i\leq \ell$, where $p^{a_i}$ divides $\omega_{i,j}$ for $j>i$.
Furthermore we can arrange that $\omega_{i,i}$ divides $\omega_{i+1,i+1}$, i.e., that the sequence $a_i$ is non-decreasing.\qedhere
\end{lemma}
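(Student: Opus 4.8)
The plan is to put the matrix presenting $M$ into a normal form resembling (column-style) Hermite normal form, adapted to the fact that the coefficient ring $\Z/p^n\Z$ is a local ring with maximal ideal $(p)$. The statement is about the existence of a nice generating set for a submodule $M\subseteq(\Z/p^n\Z)^{\oplus 2m}$ of size $|M|=p^{2mn}/|G/M|$... more concretely, since $M$ is a metabolizer, $|M|=p^{mn}$, though the lemma does not actually need that and it is cleaner to prove it for an arbitrary submodule.

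\begin{proof}[Proof sketch]
Work with a finite generating set for $M$, recorded as the columns (or rows) of a matrix with entries in $\Z/p^n\Z$. First I would reorder the coordinates of $(\Z/p^n\Z)^{\oplus 2m}$ and the generators so that, among all entries appearing in the first coordinate of some generator, one of minimal $p$-adic valuation sits in the first generator; call that valuation $a_1$ and that entry (a unit times) $p^{a_1}$. Since every other first-coordinate entry has valuation $\ge a_1$, it is a multiple of $p^{a_1}$ in $\Z/p^n\Z$, so by column operations (adding $\Z/p^n\Z$-multiples of the first generator to the others, and rescaling the first generator by a unit) I can clear the first coordinate of all generators except $\omega_1$. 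This is the key reduction: after it, the remaining generators $\omega_2,\dots$ live in the last $2m-1$ coordinates, and I recurse on that smaller free module. Iterating produces generators $\omega_1,\dots,\omega_\ell$ in exactly the staircase shape displayed, with $\omega_{i,i}=p^{a_i}$ a unit multiple of a power of $p$, with $p^{a_i}\mid\omega_{i,j}$ for $j>i$ (this divisibility is precisely the minimal-valuation choice, carried along the recursion), and with $\ell\le 2m$ the number of steps.

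It remains to arrange that $a_1\le a_2\le\cdots\le a_\ell$, i.e. that $\omega_{i,i}$ divides $\omega_{i+1,i+1}$. The point is that at each recursive stage I get to choose which surviving coordinate to promote to the pivot, and the value I promote is one of \emph{minimal} valuation among all entries of all surviving generators in all surviving coordinates. In particular the valuation $a_{i+1}$ chosen at stage $i+1$ is minimal over a set that, at the previous stage, still contained the pivot entry $p^{a_i}$ (the $(i,i)$ entry was available before it was chosen); but once we pass to stage $i+1$ we have already used up and cleared column $i$, so $a_{i+1}$ is a minimum over a \emph{sub}collection of what was available at stage $i$, hence $a_{i+1}\ge a_i$. (Slightly more carefully: at stage $i$ we chose $a_i$ minimal over the entries then present; the entries present at stage $i+1$ are obtained from those by column operations and deletion of one coordinate, and column operations over $\Z/p^n\Z$ only \emph{raise} valuations of the cleared column while leaving the others in the same ideal, so the minimum cannot decrease.) Running this monotonicity bookkeeping alongside the staircase reduction gives the non-decreasing sequence $a_i$, completing the proof.
\end{proof}

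\textbf{Main obstacle.} The genuinely delicate point is not the staircase reduction — that is a routine Hermite-normal-form argument over the local ring $\Z/p^n\Z$ — but the bookkeeping needed to guarantee monotonicity of the $a_i$ after all the column operations. One must be careful that clearing a pivot column does not secretly create an entry of smaller valuation elsewhere; this is fine because adding multiples of one column to another cannot decrease valuations below the minimum already present, but it needs to be stated precisely, and one should double-check the edge cases where some $\omega_{i,j}$ happens to be $0$ in $\Z/p^n\Z$ (equivalently has "valuation $\ge n$"), which is consistent with the divisibility claims and does not disturb the argument.
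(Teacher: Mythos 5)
Your proof is correct and is essentially the paper's argument: the paper simply cites the Gauss--Jordan elimination algorithm (via the proof of Theorem A.2 in \cite{HLR}), and your staircase reduction over the local ring $\Z/p^n\Z$, with the pivot chosen of minimal $p$-adic valuation over the whole surviving submatrix, is exactly that algorithm spelled out, including the monotonicity of the $a_i$. The only thing to tidy is your opening sentence, which suggests minimizing the valuation over the first coordinate only; as you later make clear, the pivot must be minimal over all surviving entries so that $p^{a_i}\mid\omega_{i,j}$ holds for $j>i$.
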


\begin{proof}
This is a corollary of the Gauss--Jordan elimination algorithm (see the proof of Theorem A.2 in \cite{HLR}).
\end{proof}

%
%

We establish the following notation: let $M_{p^i} = \{ x \in M \mid p^ix=0\}$, and let $k_j$ denote the number of generators $\omega_i$ with leading term $p^j$.
Note that $\ell = \sum_{j=0}^{n-1}k_j$.
For convenience, let $k_n = k_0 = k$.

\begin{lemma}\label{k_j}
The sequence $(k_j)$ is symmetric, i.e. $k_j = k_{n-j}$ for $0 \le j \le n$, and moreover $2m = \ell + k$.
\end{lemma}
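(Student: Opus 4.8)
The plan is to exploit the fact that $M$ is a metabolizer, so that $|M|^2 = |G| = p^{2mn}$, hence $|M| = p^{mn}$, together with the explicit generating set from Lemma~\ref{generating set}. First I would observe that, because of the staircase shape of the generators $\omega_1,\dots,\omega_\ell$ (each $\omega_i$ has leading entry $p^{a_i}$ in slot $i$, preceded by zeros, and $p^{a_i}\mid\omega_{i,j}$ for $j>i$), the order of $\omega_i$ modulo the subgroup generated by $\omega_{i+1},\dots,\omega_\ell$ is exactly $p^{n-a_i}$; this is the standard computation that makes the $\omega_i$ a "triangular" basis. Therefore $|M| = \prod_{i=1}^{\ell} p^{n-a_i} = p^{\sum_i (n-a_i)}$, and comparing exponents gives $\sum_{i=1}^\ell (n-a_i) = mn$, i.e. $\sum_{j=0}^{n-1} k_j(n-j) = mn$, using that $k_j$ counts the indices $i$ with $a_i=j$. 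Since also $\ell = \sum_{j=0}^{n-1} k_j$, this already pins down one linear relation among the $k_j$.

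Next I would bring in the dual metabolizer. Recall (from the discussion preceding Theorem~\ref{summaryGL}, via \cite[Theorem 2.7]{LN2}) that for a metabolizer $M$ one has $G/M \cong M$; concretely, $M^{\perp} = M$ under $\lambda$, and the annihilator/perp operation turns the "leading exponent" profile of $M$ into the complementary profile. The cleanest way to see the symmetry is to count, for each $i$, the size of $M_{p^i} = \{x\in M : p^i x = 0\}$: from the triangular basis, $x = \sum_j c_j\omega_j$ lies in $M_{p^i}$ iff each coefficient $c_j$ is divisible by $p^{\max(0,\,i-(n-a_j))} \cdot$ (appropriate power), which yields $|M_{p^i}| = \prod_j p^{\min(i,\,n-a_j)} = p^{\sum_{j} \min(i, n-a_j)}$. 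On the other hand, $M_{p^i}$ is the metabolizer-side of the pairing restricted to $G[p^i] = \{x\in G: p^i x = 0\} \cong (\Z/p^i\Z)^{2m}$, and a metabolizer there has order $p^{mi}$ — but one must be careful, since $M\cap G[p^i]$ need not be a metabolizer of $G[p^i]$ in general; instead I would use that $G[p^i]/M_{p^i} \hookrightarrow p^i$-torsion of $G/M \cong M$, giving $|M_{p^i}|\cdot|(G/M)[p^i]| \le$ ... . Rather than fight this, the more robust route is: $G/M\cong M$ implies the two quotients have the same number of cyclic summands of each order, and the order of a cyclic summand of $G/M$ of "type $p^{n-a}$" corresponds to a cyclic summand of $M$ of type $p^{a}$; matching multiplicities gives $k_{a} = k_{n-a}$ directly. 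This is the symmetry $k_j = k_{n-j}$.

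Finally, with $k_j = k_{n-j}$ in hand, the relation $2m = \ell + k$ drops out by combining the two sums above. Indeed $\sum_{j=0}^{n-1} k_j(n-j) = mn$ together with its mirror image $\sum_{j=0}^{n-1} k_{n-j}(n-j) = \sum_{j'=1}^{n} k_{j'} j' = mn$ (re-indexing $j' = n-j$, and using $k_n=k_0=k$) gives, upon adding, $n\sum_{j=0}^{n-1} k_j + k\cdot n - \text{(telescoping)} $; more directly, adding $\sum_j k_j(n-j) = mn$ to $\sum_j k_j \cdot j = mn - kn + (\text{correction from }k_n=k)$ and using symmetry $k_j=k_{n-j}$ to identify the two sums forces $n\ell = 2mn - kn$, i.e. $\ell = 2m - k$, which is the claim. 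I would write this last step carefully as a short explicit manipulation rather than leave it telescoped.

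The main obstacle I expect is making the passage from "$M$ is a metabolizer" to the symmetry of the invariant factor profile fully rigorous without hand-waving: the naive hope that $M\cap G[p^i]$ is a metabolizer of $G[p^i]$ is false, so the argument must go through the isomorphism $G/M\cong M$ and a clean bookkeeping of cyclic-summand multiplicities (equivalently, comparing $\dim_{\F_p}(p^{j-1}M / p^j M)$ with $\dim_{\F_p}(p^{j-1}(G/M)/p^j(G/M))$). Once that structural input is correctly isolated, everything else is elementary linear algebra over $\Z/p^n\Z$.
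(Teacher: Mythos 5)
Your proposal is correct and follows essentially the same route as the paper: both arguments hinge on the staircase basis from Lemma~\ref{generating set} together with the metabolizer isomorphism $G/M\cong M$ and the classification of finite abelian $p$--groups to match cyclic-summand multiplicities. The only cosmetic difference is that the paper extracts $2m=\ell+k$ from $|M|\cdot|G/M|=|G|$ in one line, while you get it from $|M|^2=|G|$ combined with the symmetry $k_j=k_{n-j}$; both computations are valid.
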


\begin{proof}
In this proof, indices in all direct sums, all products, and all sums run from $0$ to $n$.

Note that $M$ is isomorphic to $\bigoplus_{j} (\Z/p^{n-j}\Z)^{\oplus k_j}$, and that $G/M$ is isomorphic to $\bigoplus_{j} (\Z/p^{j}\Z)^{\oplus k_j}$.
Since $M$ is a metabolizer for the linking form, $G/M \cong M$, and by the classification of finite abelian groups we obtain the first property for each $0 < j < n$.
Since we defined $k_n = k_0$, this holds also for $j=0, n$.

From the previous paragraph, we see that the order of $M$ is $\prod_{j} p^{(n-j)k_j}$, and the order of $G/M$ is $\prod_{j} p^{jk_j}$.
Since the product of the orders of $M$ and $G/M$ is the order of $G$, which is $p^{2nm}$, taking logarithms (in base $p$) one gets:
\[
2nm = \sum_j (n-j)k_j + \sum_j jk_j = n\sum_j k_j = n(\ell + k_n) = n(\ell + k).\qedhere
\]
\end{proof}

For convenience, we label subgroups of $\Z/p^n\Z$ by their order: we let $H_j$ be the subgroup of order $p^j$, i.e. the subgroup generated by $p^{n-j}$.

\begin{proof}[Proof of Proposition~\ref{metabolizers}]
As announced, the proof will be by induction on the order of the subgroup $H_j$ in $\Z/p^n\Z$.
The basis of the induction is simply the vanishing of $f$ on the identity of $G$, which is obvious by the definition of $f$.

The inductive step is as follows: if $f$ is identically zero on the subgroup $H_{n-r}$ of $\Z/p^n\Z$ generated by $p^{r}$, for some $r \geq (n+1)/2$, then $f$ is identically zero on the subgroup $H_{n-r+1}$ (generated by $p^{r-1}$).

For convenience, let $\overline\ell = k_0 + \dots + k_{r-1}$, and $\overline k = 2m-\overline{\ell}$.

Consider the elements $\omega_i$ from Lemma~\ref{generating set} whose leading terms are less than or equal to $p^{r-1}$, i.e. those for which $a_i \le r-1$.
In the notation above, these are the elements $\omega_1, \dots, \omega_{\overline \ell}$.

Multiply $\omega_i$ by $p^{r-1-a_i}$, so that the leading term of $p^{r-1-a_i}\omega_i$ is $p^{r-1}$, and add them to obtain
\[
z = p^{r-1-a_1}\omega_1 + \dots + p^{r-1-a_{\overline\ell}}\omega_{\overline\ell} = (p^{r-1},\dots,p^{r-1},b_1,\dots,b_{\overline k}).
\]
The first $\overline\ell$ coordinates are equal to $p^{r-1}$, each $b_i$ is a multiple of $p^{r-1}$.

Since $k_j=k_{n-j}$ by Lemma~\ref{k_j}, and since $n-r \le r-1$ by assumption, we have
\[
\overline k =  k_0 + \sum_{j=r}^{n-1}k_j = \sum_{i=0}^{n-r} k_j \leq \sum_{i=0}^{r-1} k_j = \overline\ell.
\]
By construction we have $z \in M_{p^{n-r+1}}$ and $f^{(2m)}(z) = 0$.

Consider the subgroup $H_{n-r+1}$ of $\Z/p^n\Z$ generated by $p^{r-1}$, and the subgroup $H_{n-r}$ generated by $p^{r}$.
Now $H_{n-r+1}$ has order $p^{n-r+1}$, and $H_{n-r}$ is a subgroup of index $p$ in $H_{n-r+1}$.
By the inductive hypothesis, we know that $f$ vanishes on $H_{n-r}$, and we want to show that $f$ is zero on all of $H_{n-r+1}$.
Therefore, it is enough to prove that $f$ vanishes on $H_{n-r+1}\setminus H_{n-r}$.

We identify $H_{n-r+1}$ with $\Z/p^{n-r+1}\Z$; under this identification $H_{n-r}$ is the subgroup of $\Z/p^{n-r+1}\Z$ generated by $p$.
The group of multiplicative units $(\Z/p^{n-r+1}\Z)^*$ of $\Z/p^{n-r+1}\Z$ consists exactly of those elements not in the subgroup generated by $p$, i.e. $H_{n-r+1}\setminus H_{n-r}$;
it has order $p^{n-r+1}-p^{n-r}=p^{n-r}(p-1)=p^{n-r}(2q)$, where $p=2q+1$ as before.
For convenience, call $\overline{q} = p^{n-r}q$.

Therefore $(\Z/p^{n-r+1}\Z)^*/\{\pm 1\}$ is a cyclic group of order $\overline{q}$, an odd number.
Choose a generator $a$ for this group, so that
\[
(\Z/p^{n-r+1}\Z)^*/\{\pm 1\} = (H_{n-r+1}\setminus H_{n-r})/\{\pm1\} = \{1, a, \dots, a^{\overline{q}}\}.
\]
We define $f_i \defeq f(\pm a^ip^{r-1})$ for each $a^i$ in $H_{n-r+1}\setminus H_{n-r}$.
There are exactly $\overline{q}$ such $f_i$.
We consider a relation space
\[
R = \left\{(\alpha_0, \dots, \alpha_{\overline{q}-1}) \in \Q^{\overline{q}} : \sum^{\overline{q}-1}_{i=0}\alpha_i f_i=0\right\}.
\]
We will prove in Claim~\ref{R} below that $R$ is a $(\overline{q}-1)$--dimensional subspace unless all $f_i$ vanish.
We define a map $\psi \co M_{p^{n-r+1}} \rightarrow R$ by
\[
\psi(x_1,\dots,x_{2m})=(\alpha_0,\dots, \alpha_{\overline{q}-1}),
\]
where $\alpha_i$ is the number of coordinates $j$ such that $x_j = \pm a^ip^{r-1} \pmod{p^n}$.

Note that here the generator $a$ of $(\Z/p^{n-r+1}\Z)^*/\{\pm 1\}$ acts on $\Q^{\overline{q}}$ by shifting the indices by $1$, and acts on $M_{p^{n-r+1}}$ by multiplication by $a$; we call $\tau$ both these actions.
By Claim~\ref{equivariance} below, the map $\psi$ is $\tau$--equivariant.
Hence we have an identification of $\Q^{\overline{q}}$ with $\Q[t]/(t^{\overline{q}}-1)$ such that the linear subspace spanned by the image of $\psi$ is an ideal.
Furthermore, under this identification $\psi(z)$ is given by the polynomial $h_{z}(t)= \beta_0+\beta_1t+\dots + \beta_{\overline{q}-1}t^{\overline{q}-1}$,
where $\beta_0 \geq \overline{\ell}$ is the number of coordinates in $z$ equal to $p^{r-1}$, all the $\beta_i$ are non-negative, and
\[
\sum^{\overline{q}-1}_{i=1}\beta_i \leq \overline{k} \le \overline{\ell} \leq \beta_0.
\]

To complete the inductive step, we show that the ideal $(h_{z})$ is all of $\Q[t]/(t^{\overline{q}}-1)$, which is the case unless $h_{z}$ vanishes at a $\overline{q}$--th root of unity.
Let $\zeta$ be such a root. Considering the real part of $h_{z}$, and using the fact that $\beta_0$ is at least as large as the sum of the remaining coefficients, we see that this is only possible if $\beta_i\cdot \zeta^i=-\beta_i$ for all $1\leq i\leq \overline{q}-1$ and at least one $\beta_i$ is nonzero.
This implies that $\zeta$ is an even root of unity, but $\overline{q}$ is odd.
In conclusion, we must have that all of the $f_i$ are zero, and so $f$ vanishes on $H_{n-r+1}\setminus H_{n-r}$, as required.
\end{proof}

We conclude the section by proving the two claims made in the proof above.

\begin{claim}\label{equivariance}
The function $\psi$ is $\tau$--equivariant. That is, $\psi \circ \tau = \tau \circ \psi$.
\end{claim}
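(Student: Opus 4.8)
The plan is to unwind the identifications in play—of $H_{n-r+1}$ with $\Z/p^{n-r+1}\Z$, and of $a$ with an element of $(\Z/p^{n-r+1}\Z)^*/\{\pm1\}$—and then to observe that multiplication by $a$ merely permutes the relevant coordinates of an element of $M_{p^{n-r+1}}$ in the way that matches the index shift on $\Q^{\overline{q}}$. First I would fix an $x=(x_1,\dots,x_{2m})\in M_{p^{n-r+1}}$ and note that each $x_j$ is killed by $p^{n-r+1}$, hence lies in the subgroup $H_{n-r+1}$ generated by $p^{r-1}$; writing $x_j=c_jp^{r-1}$, the residue $c_j\in\Z/p^{n-r+1}\Z$ is well defined because $p^{r-1}$ has order exactly $p^{n-r+1}$ in $\Z/p^n\Z$. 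The coordinate $x_j$ contributes to the entry $\alpha_i$ of $\psi(x)$ precisely when $c_j$ is a unit modulo $p^{n-r+1}$ (equivalently $x_j\in H_{n-r+1}\setminus H_{n-r}$) and $[c_j]=a^i$ in $(\Z/p^{n-r+1}\Z)^*/\{\pm1\}$, while coordinates with $x_j\in H_{n-r}$ contribute to no $\alpha_i$. I would also record the two harmless facts that $\psi(-x)=\psi(x)$, since each $\alpha_i$ counts $x_j=\pm a^ip^{r-1}$ symmetrically, and that $M_{p^{n-r+1}}$ is $p^{n-r+1}$-torsion, so that multiplication by any integer lift of $a$ is a well-defined operation on it, independent of the lift and of sign; this is what $\tau$ means on $M_{p^{n-r+1}}$.

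With this in hand, the computation of $\psi(\tau x)=\psi(ax)$ is immediate. Since $ax_j=(ac_j)p^{r-1}$ and $a$ is a unit modulo $p^{n-r+1}$, multiplication by $a$ preserves the property "$c_j$ is a unit", hence permutes among themselves exactly the coordinates lying in $H_{n-r+1}\setminus H_{n-r}$, and it sends the type $[c_j]=a^i$ to $[ac_j]=a^{i+1}$, with the exponent read modulo $\overline{q}$ because $a^{\overline{q}}=1$ in $(\Z/p^{n-r+1}\Z)^*/\{\pm1\}$. Therefore the $i$-th entry of $\psi(ax)$—the number of $j$ with $[ac_j]=a^i$—equals the number of $j$ with $[c_j]=a^{i-1}$, which is the $(i-1)$-st entry of $\psi(x)$. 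Under the convention (fixed above) that $\tau$ acts on $\Q^{\overline{q}}$ by shifting indices by $1$, this says exactly that $\psi(\tau x)=\tau(\psi(x))$, which is the asserted equivariance.

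I do not expect a genuine obstacle here: the content is bookkeeping. The only point that needs care is the consistency of the identifications $H_{n-r+1}\cong\Z/p^{n-r+1}\Z$ and $a\in(\Z/p^{n-r+1}\Z)^*/\{\pm1\}$, namely checking that "multiply $x_j$ by $a$" is unambiguous even though $a$ is defined only up to sign and only modulo $p^{n-r+1}$; this works because each $x_j$ is $p^{n-r+1}$-torsion and $\psi$ is insensitive to signs. Once those identifications are set up cleanly, equivariance reduces to the one-line observation that multiplication by $a$ shifts the type index by $1$.
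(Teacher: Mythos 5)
Your proof is correct and follows essentially the same route as the paper: multiplication by the unit $a$ permutes the coordinates lying in $H_{n-r+1}\setminus H_{n-r}$ and shifts the type index $[c_j]=a^i$ to $a^{i+1}$, so the count vector of $\psi(ax)$ is the cyclic shift of that of $\psi(x)$. The extra care you take with the well-definedness of the identifications (the lift of $a$, sign-insensitivity of $\psi$) is sound bookkeeping that the paper leaves implicit.
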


\begin{proof}
Suppose $\psi(x_1,\dots,x_{2m})=(\alpha_0,\dots, \alpha_{q-1})$ as above.
Then $\psi(ax_1,\dots,ax_{2m})=(\alpha_0',\dots, \alpha_{q-1}')$, where $\alpha_i'$ is the number of indices $j$ such that $ax_j = \pm a^i p^{n-1} \pmod{p^n}$. But this is precisely the number of indices such that $x_j = \pm a^{i-1} p^{n-1} \pmod{p^n}$, and so $\alpha_i'=\alpha_{i-1}$. Hence $\psi(ax_1,\dots,ax_{2m})=(\alpha_{q-1},\alpha_0,\dots,\alpha_{q-2})$ as required.
\end{proof}

\begin{claim}\label{R}
$R$ is a $(q-1)$--dimensional subspace unless all $f_i = 0$.
\end{claim}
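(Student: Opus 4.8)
The plan is to recognize that the relation space $R$ is nothing more than the kernel of a single homogeneous linear equation. Define the linear functional
\[
L \co \Q^{\overline q} \longrightarrow \Q, \qquad L(\alpha_0,\dots,\alpha_{\overline q-1}) = \sum_{i=0}^{\overline q-1}\alpha_i f_i .
\]
By the very definition of $R$ in the proof of Proposition~\ref{metabolizers}, we have $R = \ker L$. (Here I am writing $\overline q$ throughout, in agreement with the definition of $R$; this is the quantity denoted $q$ in the statement of the claim.)

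With this identification the claim is pure linear algebra. The image of $L$, being a linear subspace of the one--dimensional space $\Q$, is either $\{0\}$ or all of $\Q$. In the first case $L$ is the zero functional; evaluating $L$ on the standard basis vector $e_i$ shows $f_i = L(e_i) = 0$ for every $i$, and $R = \Q^{\overline q}$. In the second case $L$ is surjective, so by the rank--nullity theorem $\dim_\Q R = \overline q - 1$. Thus either all $f_i$ vanish, or $R$ is a subspace of dimension exactly $\overline q - 1$, which is precisely the assertion of Claim~\ref{R}. For completeness one also notes the converse implication that is implicitly used: if some $f_i \neq 0$ then $L \neq 0$, so $R$ is a proper subspace.

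There is essentially no obstacle here: the entire content is the standard fact that a hyperplane cut out by one linear equation has codimension one unless the equation is trivial, together with the elementary observation that triviality of the functional $(\alpha_i)\mapsto\sum_i\alpha_i f_i$ is equivalent to the vanishing of all the coefficients $f_i$. The only point deserving a word of care is this last equivalence, which is immediate by testing against the coordinate basis of $\Q^{\overline q}$.
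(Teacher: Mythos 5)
Your proof is correct and is exactly the argument the paper gives: identify $R$ as the kernel of the linear functional defined by the row vector $[f_0\,\dots\,f_{\overline q-1}]$ and apply rank--nullity. You have simply spelled out the details more explicitly.
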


\begin{proof}
This follows immediately from the observation that $R$ is the kernel of the linear map $\Q^q \rightarrow \Q$ defined by the $1 \times q$ matrix $[f_0\, f_1\, \dots \, f_{q-1}]$.
\end{proof}

\end{section}
\begin{section}{Proofs of Theorems~\ref{linear independence} and~\ref{finite order}, and their corollaries}\label{s:proofs}

We adopt the notation and conventions of Section~\ref{recap}; in particular, we label $\spinc$ structures on a $\Z/2\Z$--homology sphere $Y$ by classes in $H_1(Y)$; for instance, the spin structure on $Y$ will always be $\ft_0$.

As a warm-up, we start with the proof of Theorem~\ref{finite order}.
Recall that we need to prove that $S^3_n(K)$ has infinite order if $|n| \not\equiv 1 \pmod 8$ is an odd integer.

\begin{proof}[Proof of Theorem~\ref{finite order}]
As in the statement, let $Y = S^3_n(K)$, and suppose that $mY$ bounds a rational homology ball $W$.
Up to changing the orientation of $Y$, we can assume that $n$ is positive;
then, by Example~\ref{n-surgery}, we know that $\rho_Y(\ft_0) = \frac{n-1}4$.
Therefore, $\rho_Y(\ft_0) = 0 \in \Q/2\Z$ if and only if $n \equiv 1 \pmod 8$.

Since $\#^m\ft_0$ is the spin structure $\ft_0$ on $mY$, we know that it extends to $W$, by Lemma~\ref{l:spin-extension}, and hence
\[
m\cdot d(Y,\ft_0) = d(mY,\ft_0) = 0.
\]
From this we get that $\rho_Y(\ft_0) = 0$, which implies $n\equiv 1 \pmod 8$.
\end{proof}

The following is our key computation 
of correction terms.

\begin{proposition}\label{Marco todo}
Let $Y$ be a $\Z/2\Z$--homology $3$--sphere such that the $p$--primary part $H$ of $H_1(Y)$ is $\Z/p^n\Z$, where $n$ is odd and $p$ is a prime congruent to $3$ modulo $4$.
Then the correction terms associated to $ap^{(n-1)/2} \in H$ for $0\leq a \leq (n+1)/2$ are not constant modulo $2$.
\end{proposition}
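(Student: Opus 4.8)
The plan is to push everything through property (i) of the lift, $d(Y,\ft_x)\equiv\rho_Y(\ft_x)\pmod 2$, so that the claim becomes a statement purely about the $\rho$--invariant, and then to reduce that, modulo $1$, to the linking form on the $p$--primary part, where Lemma~\ref{linking} applies. Write $G=H_1(Y)$ and let $H\cong\Z/p^n\Z$ be its $p$--primary summand, with a fixed generator $h$; by hypothesis $H$ is exactly this summand. Since $p$ is odd, $G$ has odd order, so the linking form $\lambda_Y$ splits as an orthogonal direct sum over the primary summands of $G$; in particular $\lambda_Y$ restricts to a \emph{non-degenerate} form $\lambda_Y|_H\co H\times H\to\Q/\Z$, and because $H$ is cyclic we may write $\lambda_Y(h,h)=c/p^n$ with $\gcd(c,p)=1$.

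\textbf{Key steps.} First I would recall from Section~\ref{recap} that, with the spin filling chosen there, $\rho_Y(\ft_x)-\rho_Y(\ft_0)\equiv-\lambda_Y(x,x)\pmod 1$ for every $x\in G$ (indeed this difference is the value at $x$ of the $\Q/2\Z$--valued quadratic refinement of $\lambda_Y$ featuring in Lemma~\ref{linking}). Next I would simply evaluate at $a=0$ and $a=1$, both of which lie in the allowed range $0\le a\le(n+1)/2$ since $n\ge 1$. For $a=0$ the value is $\rho_Y(\ft_0)$. For $a=1$, put $x=p^{(n-1)/2}h\in H$; then $\lambda_Y(x,x)=p^{n-1}\lambda_Y(h,h)=p^{n-1}c/p^n=c/p\in\Q/\Z$, which is nonzero because $\gcd(c,p)=1$ — this is precisely the non-vanishing half of Lemma~\ref{linking} for $\lambda_Y|_H$, read off at the generator $p^{(n-1)/2}$ of the subgroup of order $p^{(n+1)/2}$. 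Hence $\rho_Y(\ft_{p^{(n-1)/2}h})-\rho_Y(\ft_0)\equiv-c/p\not\equiv 0\pmod 1$, and an element of $\Q/2\Z$ that is nonzero in $\Q/\Z$ is a fortiori nonzero in $\Q/2\Z$. Combining with property (i), $d(Y,\ft_{p^{(n-1)/2}h})\not\equiv d(Y,\ft_0)\pmod 2$, so the listed correction terms are not constant modulo $2$.

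\textbf{Main obstacle.} There is no serious obstacle once Lemma~\ref{linking} and the reduction $d\equiv\rho\pmod 2$ are available; the only points needing (routine) care are the orthogonal splitting of $\lambda_Y$ over primary summands — which guarantees that $\lambda_Y|_H$ is itself non-degenerate on $\Z/p^n\Z$, and which is legitimate because $|G|$ is odd — and the passage from the $\pmod 1$ non-vanishing of $\lambda_Y(x,x)$ to the $\pmod 2$ non-vanishing of $\rho_Y(\ft_x)-\rho_Y(\ft_0)$ that the statement asks for. The full range $0\le a\le(n+1)/2$ is not actually needed for this proposition (the cases $a=0,1$ already suffice); I would keep it in the statement only because it is the form in which the result feeds into the proof of Theorem~\ref{main}.
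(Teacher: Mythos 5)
Your proposal is correct and follows essentially the same route as the paper: apply Lemma~\ref{linking} (via the non-degenerate restriction of $\lambda_Y$ to the $p$--primary summand) to get $\lambda_Y(x,x)\neq 0$ at $x=p^{(n-1)/2}$, conclude $\rho_Y(\ft_x)\neq\rho_Y(\ft_0)$, and then use $d\equiv\rho\pmod 2$. The extra details you supply (the orthogonal primary splitting, the explicit value $c/p$, and the passage from nonvanishing mod $1$ to nonvanishing mod $2$) are points the paper leaves implicit, and your observation that only $a=0,1$ are needed matches the paper's proof.
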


\begin{proof}
By Lemma~\ref{linking}, the linking form $\lambda_Y$ does not vanish identically on the subgroup of $H$ generated by $x = p^{(n-1)/2} \in H$.
As we saw in Section~\ref{s:metabolizers}, this also means that 
\[
\rho_Y(\ft_x) - \rho_Y(\ft_0) \neq 0.
\]
But this, in turn, implies that $d$ is not constant modulo 2 along $H$.
\end{proof}

We give the following technical statement; it will underpin the proof of Theorem~\ref{linear independence}.

\begin{theorem}\label{main}
Let $Y$ be a $\Z/2\Z$--homology $3$--sphere such that the $p$--primary part of $H_1(Y)$ is $\Z/p^n\Z$, where $n$ is odd and $p$ is a prime congruent to $3$ modulo $4$.
If $N$ is any $\Z/2\Z$--homology $3$--sphere with $H_1(N;\Z/p\Z) = 0$, then $mY \# N$ is nonzero in $\Theta^3_\Q$ for any nonzero $m$.
\end{theorem}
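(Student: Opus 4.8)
The plan is to argue by contradiction: suppose $mY\#N$ bounds a rational homology ball $W$ for some nonzero $m$. Replacing $Y$ by $-Y$ if necessary we may assume $m>0$, and replacing $W$ by its boundary connected sum with copies of itself we may assume $m$ is even, say $m=2m'$; this is legitimate because a connected sum of rational homology balls is again a rational homology ball, and $2m'Y\#N$ bounding a rational homology ball would follow from doubling $mY\#N$ only if $m$ is already even, so more carefully: if $mY\#N$ bounds a rational homology ball then so does $km(Y\#N)$ for any $k$ by taking boundary connected sums, hence by choosing $k$ appropriately we arrange an even multiple $2m'Y\#2m'N$ (or handle odd $m$ directly — the cleanest route is to note that if $mY\#N$ bounds a rational homology ball then $2(mY\#N)=2mY\#2N$ does, so we may as well assume from the start that we are looking at $2m'Y\#(2m'N)$ after relabeling, and absorb the extra copies of $N$ into the $N$-factor using that $H_1$ of a connected sum of $\Z/p\Z$-homology spheres still has trivial $p$-primary part). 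First I would set this reduction up carefully so that we are in the situation $\tilde Y := 2m'Y \# N'$ bounds a rational homology ball, where $N'$ is a $\Z/2\Z$-homology sphere with $H_1(N';\Z/p\Z)=0$ and $m'>0$.

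Next I would apply Theorem~\ref{summaryGL} to $\tilde Y$: since it bounds a rational homology ball, there is a metabolizer $M\subset G:=H_1(\tilde Y)$ with $d(\tilde Y,\ft_x)=0$ for all $x\in M$. The key point is to restrict attention to the $p$-primary part. Write $G = G_p \oplus G_{p'}$, where $G_p\cong(\Z/p^n\Z)^{\oplus 2m'}$ is the $p$-primary part (coming entirely from the $Y$-factors, since $H_1(N';\Z/p\Z)=0$ forces the $p$-primary part of $H_1(N')$ to be trivial), and $G_{p'}$ is the prime-to-$p$ part. Because $|G|$ is a square and the linking form splits along the primary decomposition, the intersection $M_p := M\cap G_p$ is a metabolizer for $(G_p,\lambda_{\tilde Y}|_{G_p})$: this is a standard fact about metabolizers of linking forms on finite abelian groups respecting primary decomposition, and I would cite or quickly reprove it (the order of $M_p$ is forced to be $p^{nm'}$ by a counting argument using that $M$ surjects onto each primary projection appropriately — alternatively invoke that a metabolizer of an orthogonal sum of linking forms of coprime order decomposes as an orthogonal sum of metabolizers). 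Now set $f:=\bar d_Y\colon \Z/p^n\Z\to\Q$, $f(g)=d(Y,\ft_g)-d(Y,\ft_0)$. By additivity of $d$ under connected sum (property (iii) of the lift), for $x=(g_1,\dots,g_{2m'})\in G_p$ embedded into $G$ via the spin structure as origin, one computes $d(\tilde Y,\ft_x)-d(\tilde Y,\ft_0) = \sum_i f(g_i) = f^{(2m')}(x)$; since $\ft_0$ extends to $W$ by Lemma~\ref{l:spin-extension} we have $d(\tilde Y,\ft_0)=0$, so $f^{(2m')}$ vanishes on $M_p$. Also $f(0)=0$ and $f(-g)=f(g)$ by property (ii), and $f^{(2m')}$ is compatible with the linking form $\lambda_{\tilde Y}|_{G_p}$ modulo $2$ by Theorem~\ref{summaryGL}. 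Thus the hypotheses of Proposition~\ref{metabolizers} are met, and we conclude that $f$ vanishes on the subgroup of $\Z/p^n\Z$ generated by $p^{(n-1)/2}$.

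Finally I would derive the contradiction: Proposition~\ref{Marco todo} asserts exactly that the correction terms $d(Y,\ft_{ap^{(n-1)/2}})$ for $0\le a\le (n+1)/2$ are \emph{not} constant modulo $2$, i.e.\ $\bar d_Y = f$ does not vanish modulo $2$ on the subgroup generated by $p^{(n-1)/2}$, hence does not vanish there. This contradicts the conclusion of the previous paragraph, completing the proof. The main obstacle I anticipate is the bookkeeping in the reduction to an even multiple and, more substantively, the claim that $M\cap G_p$ is genuinely a metabolizer for the restricted linking form of the correct size — getting the orders right (that $|M_p| = p^{nm'}$, matching $|G_p| = p^{2nm'}$) requires care, since a priori $M$ need not split as a direct sum along the primary decomposition even though the linking form does; the resolution is that for linking forms, a metabolizer of an orthogonal sum over coprime torsion \emph{does} decompose, because the $p$-part of a metabolic form is metabolic with metabolizer the $p$-part of the original metabolizer. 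Everything else (additivity of $d$, the spin extension, applying the two earlier propositions) is then a matter of assembling the pieces.
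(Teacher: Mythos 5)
Your proposal is correct and follows essentially the same route as the paper's proof: extract a metabolizer from Theorem~\ref{summaryGL}, pass to its $p$--primary part, feed $f=\bar d_Y$ into Proposition~\ref{metabolizers}, and contradict Proposition~\ref{Marco todo}. The only cosmetic differences are that the paper gets evenness of $m$ for free from the square-order condition in Theorem~\ref{summaryGL} (since $n$ is odd, $p^{nm}$ is a square only if $m$ is even) rather than by doubling, and that it asserts without further argument the point you rightly flag as needing care, namely that the $p$--primary part of the metabolizer is itself a metabolizer for the restricted linking form.
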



\begin{proof}
Suppose $\overline{Y} = mY\#N$ bounds a rational homology ball for some $Y$ and $N$ as in the statement of the theorem.
Up to reversing the orientation of $\overline{Y}$, we can assume that $m$ is positive.
For convenience, let $H$ denote the $p$--primary part of $H_1(Y)$, viewed as a subset of $H_1(Y)$.

By Theorem~\ref{summaryGL}, $m$ is even, and there is a metabolizer $\overline{M}$ of $(H_1(\overline{Y}),\lambda_{\overline{Y}})$ on which $d$ (and hence $\bar{d}$) vanishes identically.
Let $M$ be the $p$--primary part of $\overline{M}$.
This is a subgroup of the $p$-primary part $G$ of $H_1(\overline{Y})$, which is $H^{\oplus m}$;
the latter is, in turn, isomorphic to $(\Z/p^n\Z)^{\oplus m}$.

In fact, $M$ is a metabolizer for $(G,\lambda)$, where $\lambda$ is the restriction of $\lambda_{\overline{Y}}$ to $G\times G$.
Since $\bar{d}$ vanishes on $\overline{M}$, it vanishes also on $M\subset \overline{M}$

Let $f \co H \to \Q$ be defined by $f(h) = \bar{d}(Y,\ft_h) = d(Y,\ft_h) - d(Y, \ft_0)$.
As in Section~\ref{algebra}, we can then define $f^{(m)} \co H^{\oplus m} \rightarrow \Q$ by
$f^{(m)}(h_1,\dots,h_m)= f(h_1)+\dots +f(h_m)$.
Note that
\begin{flalign*}
f^{(m)}(h_1,\dots,h_m) &= (d(Y,\ft_{h_1})- d(Y,\ft_{0}))+\dots +(d(Y,\ft_{h_m})- d(Y,\ft_{0})) &&\\
& =  (d(Y,\ft_{h_1}) + \dots + d(Y,\ft_{h_m}) + d(N,\ft_0)) - (md(Y,\ft_0) + d(N,\ft_0)) &&\\ 
& = d(mY \# N, \ft) - d(mY \# N, \ft_{0}) = \bar{d}(\overline{Y},\ft),
\end{flalign*}
where $\ft = \ft_{h_1}\#\dots\#\ft_{h_m}\#\ft_0$, and the last connected summand, $\ft_0$, is the spin structure on $N$.

It follows that $f^{(m)}$ is identically zero on $M$ (cf. \cite[Theorem 3.2]{HLR}).
Then $f^{(m)}$ satisfies the hypotheses of Proposition~\ref{metabolizers}, and so we can conclude that $f$ vanishes identically on the subgroup of $H$ of order $p^{(n+1)/2}$.
But since $f(h) = d(Y,\ft_{h}) - d(Y,\ft_{0})$, this contradicts Proposition~\ref{Marco todo}.
\end{proof}

Now we give the proof of Theorem~\ref{linear independence}.

\begin{proof}[Proof of Theorem~\ref{linear independence}]
Consider $\overline{Y} = a_1Y_{1}\#a_2Y_{2}\# \dots \#a_nY_{n}\#Z$ for $Z$ an integral homology sphere and some non-zero integers $a_1,\dots,a_n$ (and some reindexing of the $Y_i$).
The statement is equivalent to proving that $\overline{Y}$ is non-zero in $\Theta^3_\Q$.
Then $Y=Y_{1}$, $m=a_1$, and $N=a_2Y_{2}\# \dots \#a_nY_{n}\#Z$ satisfy the assumptions of Theorem~\ref{main}, and therefore $\overline{Y}$ is non-zero in $\Theta^3_\Q$.
\end{proof}

Theorem~\ref{main} translates into the following corollary about knot concordance, which is slightly more general than Corollary~\ref{concordance}.
Note that $\det K$ denotes the determinant of $K$, and recall that $|H_1(Y_K)| = \left|\det K\right|$.

\begin{corollary}[\cite{LN1, LN2}]\label{main concordance}
Let $K$ be a knot in $S^3$ such that the $p$--primary part of $H_1(Y_K)$ is $\Z/p^n\Z$, where $n$ is odd and $p$ is a prime congruent to $3$ modulo $4$.
If $J$ is any other knot such that $p$ does not divide $\det J$, then $mK \# J$ is nonzero in $\calc$ for any $m$.
\end{corollary}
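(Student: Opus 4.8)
The plan is to derive this corollary as an immediate translation of Theorem~\ref{main} through the double branched cover construction. I will use three standard facts about the double branched cover $Y_L$ of a knot $L$: (1) if $L$ is slice then $Y_L$ bounds a rational homology ball, namely the double cover of $B^4$ branched along a slice disk; (2) the double branched cover is additive under connected sum and intertwines orientation reversal, so that $Y_{mL_1\#L_2}\cong mY_{L_1}\#Y_{L_2}$ in the notation of the paper (here $mK=\#^mK$ with the convention $-|m|K=|m|(-K)$); and (3) $H_1(Y_L)$ is a finite abelian group of odd order $|\det L|$, so in particular $Y_L$ is a $\Z/2\Z$--homology sphere.

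Assume for contradiction that $mK\#J$ is slice for some nonzero $m$. By facts (1) and (2), the manifold $\overline{Y}\defeq mY_K\#Y_J\cong Y_{mK\#J}$ bounds a rational homology ball, i.e.\ it is zero in $\Theta^3_\Q$. I would then check that $(Y_K,Y_J)$ satisfies the hypotheses of Theorem~\ref{main} with $Y=Y_K$ and $N=Y_J$: by fact (3) both are $\Z/2\Z$--homology spheres; by assumption the $p$--primary part of $H_1(Y_K)$ is $\Z/p^n\Z$ with $n$ odd and $p\equiv 3\pmod 4$; and since $|H_1(Y_J)|=|\det J|$ is coprime to $p$ we have $H_1(Y_J;\Z/p\Z)=H_1(Y_J)/pH_1(Y_J)=0$. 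Theorem~\ref{main} then gives that $mY_K\#Y_J$ is nonzero in $\Theta^3_\Q$, contradicting the previous sentence. Hence $mK\#J$ is not slice.

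The argument presents no genuine difficulty, since Theorem~\ref{main} carries all the weight and the corollary is purely a dictionary between knot concordance and rational homology cobordism; the only points requiring any care are the three classical topological inputs recorded above. One may moreover note, exactly as in the remark following Corollary~\ref{concordance}, that combining double branched covers with Theorem~\ref{linear independence} upgrades the conclusion to linear independence statements in $\calc/\calc_1$, but for the statement as given Theorem~\ref{main} alone suffices.
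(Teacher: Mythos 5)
Your proposal is correct and takes essentially the same approach as the paper: assume $mK\#J$ is slice, take the double cover of $B^4$ branched along the slice disk to get a rational homology ball bounding $mY_K\#Y_J$, and contradict Theorem~\ref{main} after checking its hypotheses (in particular that $H_1(Y_J;\Z/p\Z)=0$ because $p\nmid\det J$). The paper's proof is just a terser version of yours, leaving the verification of the hypotheses implicit.
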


\begin{proof}
Suppose such a knot $\overline{K} = mK \# J$ is slice.
Then the double cover of $B^4$ branched over the slice disk for $\overline{K}$ is a rational homology ball with boundary $Y_{\overline{K}}= mY_K \# Y_J$, which contradicts Theorem~\ref{main}.
\end{proof}

The above statement when $J$ is the unknot is exactly Theorem 1.2 in~\cite{LN2},
but the full statement of Corollary~\ref{main concordance} follows from their argument as in Theorem 9.4 of \cite{LN1}
(which is the case $n=1$).
Finally, Corollary~\ref{concordance finite order} follows from Theorem~\ref{finite order} in the same manner as the above proof, and Corollary~\ref{concordance} follows from either Theorem~\ref{linear independence} or Corollary~\ref{main concordance}.

\end{section}


\bibliographystyle{amsalpha}
\bibliography{terms.bib}

\end{document}